\def\C{{\mathbb{C}}}
\newtheorem{theorem}{Theorem}[section]
\newtheorem{lemma}[theorem]{Lemma}
\newtheorem{proposition}[theorem]{Proposition}
\newtheorem{corollary}[theorem]{Corollary}
\newtheorem{definition}[theorem]{Definition}
\theoremstyle{definition} %To make remarks non-italic.
\newtheorem{remark}[theorem]{Remark}
\newtheorem{notation}[theorem]{Notation}
\numberwithin{equation}{section}
\title[Parabolic Bundles on Nodal Curves] {A Note on Parabolic Bundles on Nodal Curves}
\author[ C. Arusha]{ C. Arusha}
\address{ Indian Institute of Science Education and Research, Bhopal}
\email{arushnmaths@hotmail.com}
\author[Sanjay Kumar Singh]{Sanjay Kumar Singh}
\address{Indian Institute of Science Education and Research, Bhopal}
\email{sanjayks@iiserb.ac.in}
\begin{document}
\subjclass[2000]{14F05 (14H60)}

\keywords{Parabolic sheaf,  Moduli space, Nodal curves, Stability, Representation, Fundamental group}

%\thanks{The second named author is supported by the SERB Early Career Research Award
%(ECR/2016/000649) by the Department of Science \& Technology (DST), Government of India.} 

%\date{12 Feb 2020}
\date{}

\begin{abstract}
Mehta and Seshadri have proved that the set of equivalence
classes of irreducible unitary representations of the fundamental group of a punctured compact Riemann surface,
can be identified with equivalence classes of stable parabolic bundles of parabolic degree zero on
the compact Riemann surface. In this note, we discuss the Mehta-Seshadri correspondence over an irreducible projective curve with at most nodes as singularities. 
\end{abstract}

%\begin{abstract}
%	Mehta and Seshadri has proved that the set of equivalence classes of irreducible unitary representations of the fundamental group of a punctured compact Riemann surface,
%	can be identified with equivalence classes of stable parabolic bundles of parabolic degree zero on the compact Riemann surface. In this note, we prove that the Mehta-Seshadri theorem is not true for an irreducible projective curve with nodes as singularities. In particular, we construct examples of stable parabolic bundles that do not arise from irreducible unitary representations. Furthermore, we show that such bundles form a closed subset of the moduli space of stable parabolic bundles and has codimension at least two.
%\end{abstract}

\maketitle

\section{Introduction}

Let $X$ be a nonsingular irreducible projective algebraic curve defined over $\mathbb{C}$. 
A representation $\rho$ of $\pi_1(X)$, the fundamental group of $X$, into ${\rm GL}(n,\C)$ induces a 
vector bundle of rank $n$ and degree zero on $X$. Weil \cite{A} proved that a vector bundle on $X$ 
arises from a representation if and only if it is a direct sum of indecomposable vector bundles of 
degree zero. Narasimhan and Seshadri \cite{NS} characterised the vector bundles that arises from 
unitary vector bundles by showing that a vector bundle is associated to a unitary representation 
if and only if it is a direct sum of stable bundles of degree zero.

In \cite{VMCS}, Mehta and Seshadri constructed the moduli space of parabolic bundles and proved that 
the equivalence classes of stable parabolic bundles of degree zero on a compact Riemann surface can be 
identified with the set of equivalence classes of irreducible unitary representations of the fundamental 
group of a punctured Riemann surface. In \cite{IB} and \cite{IBM}, the authors prove the parabolic analogue 
of Weil's criterion. Generalising the notion of parabolic bundles, Bhosle introduced generalised parabolic 
bundles in \cite{UB4}. In \cite{UB3}, Bhosle addressed a question of Hitchin and Horrocks on relating 
generalised parabolic bundles to representations of some group. She provided a partial answer to this 
question and also proved that the Narasimhan-Seshadri correspondence is not true for an irreducible nodal curve.

Later, Narasimhan and Ramadas \cite{MSTR}  studied  generalised parabolic sheaves with parabolic structures
at finitely many points and obtained a moduli space for rank two torsion free sheaves endowed with these 
structures. Sun \cite{XS} obtained similar results for torsion free sheaves of arbitrary ranks. This 
motivates one to study the analogue of Mehta-Seshadri correspondence for nodal curves.

% Let $X$ be a nonsingular irreducible projective algebraic curve defined over $\mathbb{C}$.
% Given a representation  $\rho$ of the fundamental group $\pi_1(X)$ of $X$ into $GL (n, \mathbb{C})$, one can 
% associate to it a  vector bundle of rank $n$ on $X$. In his
% celebrated work \cite{A} Andr\'{e} Weil proved that a vector bundle $E$ on $X$ is 
% associated to a representation of the fundamental group if and only if $E$ is a direct 
% sum of indecomposable vector bundles of degree zero. Later it was shown that a vector bundle 
% is associated to a unitary representation of the fundamental group if and only if it is 
% quasi-stable (polystable) i.e. it is a direct sum of
% stable vector bundles of degree zero. 

%Professors Hitchin and Horrocks have asked the following question. Is there
%a relationship between generalised parabolic vector bundles (GPBs) and representations
%of suitable groups? In \cite{UB3}, Bhosle gave a partial answer to this question 
%and studied the relation between vector bundles and representations of the fundamental group in 
%case the curve has nodes as singularities.

This note primarily aims to generalise the results of \cite{UB3} to 
the context of generalised parabolic bundles with parabolic structures. More precisely, 
let $Y$ denote an irreducible nodal curve with arithmetic genus $g$ and let $p:X\to Y$ be its 
normalisation with genus $g(X)$, we prove

\begin{theorem}\label{Result1}
	A parabolic bundle $F_*$, with a parabolic structure at $P$, of parabolic degree zero on an 
	irreducible nodal curve $Y$, is associated to a representation $\zeta$ of $\pi_1(Y-P)$ if and only if every direct summand of its pullback to the normalisation $X$ of $Y$ has parabolic degree zero.
	
	If $\zeta_{X}$, the restriction of $\zeta$ to $\pi_1(X-P)$, is unitary (resp. irreducible unitary) then $F_*$ is parabolic semistable (resp. stable).
\end{theorem}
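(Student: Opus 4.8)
The plan is to transport the entire problem to the smooth curve $X$ through the normalisation $p\colon X\to Y$, apply the parabolic Weil criterion of \cite{IB,IBM} and the Mehta--Seshadri theorem \cite{VMCS} there, and then descend the conclusions across the nodes. The first step is to record the two dictionaries that make this possible. Since $P$ is a smooth point and $Y-P$ is obtained from $X-P$ by identifying, for each of the $\delta$ nodes $q_i$ of $Y$, the two points $a_i,b_i\in X$ lying over $q_i$, van Kampen's theorem gives $\pi_1(Y-P)\cong \pi_1(X-P)*F_\delta$, where $F_\delta$ is free on loops $\gamma_1,\dots,\gamma_\delta$ through the nodes and $p_*$ is the inclusion of the first factor; hence $\zeta_X=\zeta\circ p_*$ is simply the restriction. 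On the sheaf side, a parabolic bundle $F_*$ on $Y$ is the same datum as the parabolic bundle $p^*F_*$ on $X$ (its parabolic structure carried to the single point over $P$) together with gluing isomorphisms $\sigma_i\colon (p^*F_*)_{a_i}\xrightarrow{\ \sim\ }(p^*F_*)_{b_i}$ realising the descent to $Y$. Under these dictionaries, ``$F_*$ is associated to $\zeta$'' means exactly that $p^*F_*$ is the parabolic bundle associated to $\zeta_X$ and that $\zeta(\gamma_i)=\sigma_i$.

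Granting this, the forward implication of the first assertion is immediate: if $F_*$ comes from $\zeta$ then $p^*F_*$ comes from $\zeta_X$, and the parabolic Weil criterion on $X$ forces every direct summand of $p^*F_*$ to have parabolic degree zero. For the converse, suppose every summand of $p^*F_*$ has parabolic degree zero. The parabolic Weil criterion returns a representation $\zeta_X$ of $\pi_1(X-P)$ with associated bundle $p^*F_*$; the flat structure it furnishes lets me parallel-transport the fibres over $a_i$ and over $b_i$ to a common fibre over the base point, and so read each descent isomorphism $\sigma_i$ as an element of $\mathrm{GL}(n,\C)$. Setting $\zeta|_{\pi_1(X-P)}=\zeta_X$ and $\zeta(\gamma_i)=\sigma_i$ then defines a representation of $\pi_1(Y-P)$ whose associated parabolic bundle is, by construction, the descent of $p^*F_*$, namely $F_*$. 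The points to verify here are essential but routine: that the $\sigma_i$ are well defined up to simultaneous conjugacy (two transport paths differ by $\zeta_X$-monodromy, absorbed into the free-product structure, so the class of $\zeta$ is unambiguous), and that the parabolic weights produced by $\zeta$ at $P$ agree with those of $F_*$ --- which holds because $a_i,b_i$ lie away from $P$, so the gluing never meets the parabolic structure, while $p$ is an isomorphism near $P$.

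For the final assertion I combine this correspondence with Mehta--Seshadri on $X$. If $\zeta_X$ is unitary then $p^*F_*$ is parabolic polystable of parabolic degree zero, in particular parabolic semistable; if $\zeta_X$ is moreover irreducible, $p^*F_*$ is parabolic stable. It remains to descend (semi)stability from $X$ to $Y$. Given a saturated parabolic subsheaf $E_*\subseteq F_*$ on $Y$, let $W_*\subseteq p^*F_*$ be the saturation, inside $p^*F_*$, of the torsion-free image of $p^*E_*$; then $W_*$ is a parabolic subsheaf of the same generic rank, and the semistability (resp. stability) of $p^*F_*$ yields $\mu_{\mathrm{par}}(W_*)\le \mu_{\mathrm{par}}(p^*F_*)=0$ (resp. strict for $E_*$ proper of smaller rank). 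The theorem then follows from the slope comparison $\mu_{\mathrm{par}}(E_*)\le \mu_{\mathrm{par}}(W_*)$.

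That last inequality is the main obstacle, and it is genuinely delicate because pullback does not preserve degrees at the nodes: if $E_*$ fails to be locally free at $q_i$, with non-locally-free defect $b_i\ge 0$, then the parabolic degree of the torsion-free pullback drops by $\sum_i b_i$, whereas saturating inside $p^*F_*$ raises it again by the length of the saturation. The content of the inequality is precisely that the saturation recovers at least the lost defect, i.e. that this length dominates $\sum_i b_i$; this is the parabolic analogue of Bhosle's degree estimate in \cite{UB3}, and establishing it --- rather than the bookkeeping of the first part --- is where the real work lies. Once it is in hand, both the semistable and the stable cases of the final statement follow as above.
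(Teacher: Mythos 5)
Your first assertion (the representation correspondence) is essentially correct and follows the paper's own route: van Kampen gives $\pi_1(Y-P)\cong\pi_1(X-P)*\mathbb{Z}*\cdots*\mathbb{Z}$ (Corollary~\ref{fungrpcor}), the parabolic Weil criterion (Theorem~\ref{WeilAnalogue}) produces or consumes the representation $\zeta_X$ on the normalisation, and your ``gluing isomorphisms $\sigma_i$'' are exactly the paper's GPB data in disguise --- the graph of $\sigma$ in $(E_{\zeta})_{x_1}\oplus(E_{\zeta})_{x_2}$ defines the quotient $Q_\zeta$, and conversely $g=q_2^{-1}\circ q_1$ recovers $\zeta(1)$ (Theorem~\ref{GPBrep}). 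One small omission: ``associated to $\zeta$'' in the paper presupposes $\zeta(\tau_P)=\exp(A_P)$ with $A_P$ diagonalizable, so that the parabolic structure at $P$ is even defined; you should state this normalisation, though it is a minor point.

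The genuine gap is in the second assertion, the descent of (semi)stability from $p^*F_*$ to $F_*$. You reduce everything to the inequality $\mu_{\mathrm{par}}(E_*)\le\mu_{\mathrm{par}}(W_*)$, where $W_*$ is the saturation in $p^*F_*$ of the torsion-free pullback of a subsheaf $E_*\subset F_*$, you correctly identify that this fails to be routine precisely when $E$ is not locally free at the nodes, and then you explicitly leave it unproved (``where the real work lies''). But that inequality \emph{is} the theorem's second assertion: without it neither the semistable nor the stable case is established, so as written this is an announced reduction, not a proof. The paper circumvents this local degree bookkeeping entirely by passing through GPB stability: for $(E_*,Q)\in T$ the projections $p_i\colon F_1(E)\to E_{x_i}$ are isomorphisms, which yields the elementary estimate $\dim Q^{E'}\ge \mathrm{rk}\,E'$ for every subbundle $E'\subset E$ (Lemma~\ref{inequalities}); together with $\mathrm{par}\deg E'_*\le 0$ (resp.\ $<0$), forced by parabolic semistability (resp.\ stability) of $p^*F_*$ at parabolic degree zero, this shows $(p^*F_*,Q)$ is a (semi)stable GPB (Proposition~\ref{stabilitycorresp}(1)), and then the known correspondence of Narasimhan--Ramadas and Sun (Proposition~\ref{GPB}; \cite[Proposition 4.7]{MSTR}, \cite[Lemma 2.2]{XS}) converts GPB (semi)stability into parabolic (semi)stability of $F_*$ on $Y$ --- it is in those cited proofs that the comparison of subsheaves across the node, i.e.\ your inequality, is actually carried out. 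To repair your argument, either invoke one of those results at your final step (your saturation inequality is essentially their statement restated), or supply the local analysis at the nodes yourself; at present the crux is asserted rather than proved.
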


Our secondary aim is to show that the stable parabolic bundles that arise from representations form a big open set. 
We use the codimension computations carried out for in \cite[Section 7.1]{AUS} and \cite[Proposition 5.1]{XS} to prove the
following theorem.

Let $U_Y$ denote the moduli space of $S$-equivalence classes of semistable parabolic sheaves of parabolic degree zero and  let  $U'_Y$  be the open dense subvariety of $U_Y$ corresponding to parabolic vector bundles.
We denote by $\overline{U}_Y'^s$ the  subset of $U'_Y$ 
corresponding to parabolic bundles $F_*$ such that $p^*(F_*)$ is parabolic stable.

\begin{theorem}\label{Result3}
	Let $Y$ be a complex nodal curve with $g(X) \ge 2$ and let $P\in Y$ be a smooth point. 
	The subset of $U'_Y$ consisting of vector bundles which arise from irreducible unitary 
	representations of the  fundamental group of $\pi_1(Y-P)$ has complement of codimension at least $2$.
\end{theorem}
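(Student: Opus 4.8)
The plan is to prove that the "bad locus" — the set of points in $U'_Y$ not arising from irreducible unitary representations — has codimension at least $2$. By Theorem \ref{Result1}, a parabolic bundle $F_*$ arises from an irreducible unitary representation of $\pi_1(Y-P)$ precisely when its pullback $p^*(F_*)$ is parabolic stable (this is the content of $\overline{U}_Y'^s$). So the complement I must bound is $U'_Y \setminus \overline{U}_Y'^s$, the locus of parabolic bundles $F_*$ whose pullback to the normalisation fails to be parabolic stable.

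Let me think about how to organize the estimate. I would decompose the bad locus into two pieces and bound each separately. First, there is the locus where $F_*$ itself is strictly semistable or unstable as a parabolic bundle on $Y$; second, within the locus of $F_*$ that are parabolic stable on $Y$, there is the sublocus where the pullback $p^*(F_*)$ nonetheless fails to be stable on $X$. For the first piece, I would invoke the codimension estimates already available: the excerpt grants me the computations from \cite[Section 7.1]{AUS} and \cite[Proposition 5.1]{XS}, which give lower bounds (growing with $g(X)\ge 2$) on the codimension of the non-stable locus inside the moduli space of parabolic sheaves. The hypothesis $g(X)\ge 2$ is exactly what makes these bounds at least $2$.

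For the second piece, the key observation is to relate (in)stability upstairs on $X$ to the degree condition on direct summands furnished by Theorem \ref{Result1}. A parabolic stable $F_*$ on $Y$ whose pullback is not stable must have $p^*(F_*)$ either strictly semistable or decomposing with a summand of nonzero parabolic degree; in either case $p^*(F_*)$ admits a parabolic subbundle of parabolic degree $\ge 0$ that is not the whole bundle. I would translate this into a condition on the generalised parabolic structure at the point(s) of $X$ lying over the node, since a bundle on $Y$ corresponds to a bundle on $X$ together with gluing data over the preimage of the node. The locus of gluing data forcing a destabilising sub-object upstairs is a proper closed condition, and I would estimate its codimension by a dimension count comparing the family of admissible gluings against those producing the forbidden subbundle, again using $g(X)\ge 2$ to ensure the estimate clears $2$.

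The main obstacle I anticipate is the second piece: controlling the interaction between the gluing/parabolic data over the node and stability on the normalisation, rather than the purely moduli-theoretic first piece where the cited codimension computations do most of the work. Concretely, I would need to show that demanding $p^*(F_*)$ be non-stable while $F_*$ stays stable on $Y$ cuts out a subvariety of codimension at least $2$, which requires a careful parametrisation of parabolic subbundles of a fixed parabolic type and a Riemann–Roch / deformation argument bounding the dimension of the family of $F_*$ admitting such a subbundle. Once both pieces are shown to have codimension $\ge 2$, their union does too, and since $\overline{U}_Y'^s$ is precisely the image of the irreducible-unitary representation locus under Theorem \ref{Result1}, the complement of that locus in $U'_Y$ has codimension at least $2$, completing the proof.
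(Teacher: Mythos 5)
Your reduction of the theorem to bounding the codimension of $U'_Y \setminus \overline{U}_Y'^s$ is the right starting point (though note that the identification of ``arises from an irreducible unitary representation'' with $\overline{U}_Y'^s$ is the content of Theorem \ref{Repbijection} --- Mehta--Seshadri applied on $X$ together with the gluing construction --- and not of Theorem \ref{Result1} alone, which gives only one implication). The genuine gap is in your second piece. Whether $p^*(F_*)$ is parabolic stable is a property of the parabolic bundle $E_*=p^*(F_*)$ on $X$ \emph{alone}; by Lemma \ref{GPBcorresp}, the pullback of $F_*$ is literally the underlying bundle $E$ of the corresponding GPB $(E_*,Q)$, independent of the gluing datum $Q$. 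So in the fibration picture you set up, the locus $\{p^*(F_*)\ \text{not stable}\}$ is the \emph{full preimage} of the non-stable locus in the base: over a non-stable $E_*$ every gluing is bad, and over a stable $E_*$ none is. There is no ``proper closed locus of gluing data forcing a destabilising sub-object upstairs,'' and no codimension can be gained in the fibre direction by the dimension count you propose. The Riemann--Roch/deformation argument that you flag as the main obstacle is aimed at a condition that does not exist; carried out honestly, it would return codimension $0$ in the fibre direction and leave you exactly where you started.

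The observation that dissolves the difficulty --- and is how the paper argues --- is precisely that the bad locus is a full preimage under the $GL(n)$-fibration $\rho\colon \widetilde{\mathcal{H}}'\to\widetilde{\mathcal{R}}$ from the parameter space of GPBs-with-isomorphic-gluings to the parameter space of parabolic bundles on $X$. Hence its codimension \emph{equals} the codimension of $\widetilde{\mathcal{R}}-\widetilde{\mathcal{R}}^{s}$ in $\widetilde{\mathcal{R}}$, which is at least $(n-1)(g(X)-1)+1\geq 2$ by \cite[Proposition 5.1(1)]{XS} (Propositions \ref{codimparabolic} and \ref{G}); one then descends to $U'_Y$ through the GIT quotient and the isomorphism $\mathcal{P}'\cong \mathcal{U}'_Y$ (Corollary \ref{GG} and Theorem \ref{codimpullback}), and finally uses that the bijection of Theorem \ref{Repbijection} is a homeomorphism. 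This handles both of your pieces at once: by Proposition \ref{stabilitycorresp}(2), $F_*$ non-stable on $Y$ already forces $p^*(F_*)$ non-stable, so no decomposition is needed. Your piece 1 also cannot be settled by citing \cite{AUS} or \cite{XS} ``directly,'' since those codimension estimates live on parameter spaces of objects over $X$ (parabolic bundles, GPS), not on $U_Y$; transferring them to $U'_Y$ requires exactly the fibration-plus-quotient mechanism above.
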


% The complement of the open subset of $U'_Y$ consisting of vector bundles which come from irreducible unitary representations of the  fundamental group of $\pi_1(Y-P)$ is  of codimension at least 2.

In Section 2, we discuss all the required preliminaries and known results. In Section 3, we prove Theorem 
\ref{Result1}. We discuss the proof of Theorem \ref{Result3} in Section 4.

\section{Preliminaries}

\subsection{Parabolic Sheaves}

Let $Y$ be an irreducible projective curve over $\mathbb{C}$. Let $I$ be a fixed finite set of smooth  points of $Y$.

% Let $E$ be a torsion free sheaf on $Y$, clearly such a sheaf is a vector bundle on $Y_{reg}$, 
% where $Y_{reg}$ is the set of all smooth (regular) points of $Y$.

\begin{definition}\label{parabolic}
A parabolic sheaf on $Y$ with parabolic structure over $I$ is a torsion free sheaf
$E$ on $Y$ together with the following data at each $p \in I$,

\begin{enumerate}
		\item  a flag on the fibre $E_p$,
		\begin{equation*}
			E_p=F_0(E_p)\supset F_1(E_p)\supset\dots \supset F_{r}(E_p)=0.
		\end{equation*}
		\item and real weights $\alpha=(\alpha_1,\dots, \alpha_{r})$ attached to  
		$F_0(E_p),F_1(E_p), \dots, F_{r}(E_p)$ such that $0\leq\alpha_1<\alpha_2<\dots<\alpha_{r}<1$.		
	\end{enumerate}
\end{definition}
\begin{remark}
	\hfill
 \begin{enumerate}
   \item A quasi-parabolic structure on $E$ over $I$ is simply the condition $(1)$ of Definition \ref{parabolic} for each $p\in I$.
  \item Let $l_i=\dim F_{i}(E_p)$ and $k_i=l_i-l_{i+1}$. Then $l=(l_0,l_1, \dots, l_r)$ is called the flag type of $E$ at $p$.
  The numbers $k_i$ are called the multiplicities of $\alpha_i$ at $p$.
  \item  We say that $E$ is a torsion free sheaf  with a parabolic structure (or a parabolic sheaf) over $I$ or at $p$ if $I=\{p\}$. Also, to distinguish a parabolic  sheaf from its underlying torsion free sheaf, we denote it  by $E_*$ when $E$ is the underlying torsion free sheaf. And, if the underlying sheaf $E$ is locally free, we call it a parabolic vector bundle or simply parabolic bundle.
% We write weights in the ascending order.
 \end{enumerate}

\end{remark}

%%%%%%%%%%%%%%%%%%%%%%%%%%%%%%%%%%%%whenever we say Parabolic sheaves it means we suume that it is torsion free.

% 
% \begin{definition}Let $E,F$ be  torsion free sheaves on $Y$ with parabolic structures 
% over $I$.
% A morphism $\Phi:E_*\rightarrow F_*$ of parabolic torsion free sheaves is a homomorphism of $\Phi: E\to F$ such that for 
% any parabolic point $p\in I$, we have $\Phi_p[F_i(E_p)]\subset F_{j+1}(F_p)$ whenever $\alpha_i>\beta_j$ where 
% $\alpha=(\alpha_1,\dots,\alpha_r)$ and $\beta=(\beta_1,\dots,\beta_s)$  denote the weights of $E_*$ and $F_*$ 
% respectively.
% \end{definition}
% 
% 
% 
% \noindent Since the definition of a morphism of parabolic sheaves depends on weights, the notion of isomorphism can only be 
%  defined  for quasi parabolic sheaves. (i.e., by fixing the weights of the flag for two parabolic bundles, we can talk about 
%  isomorphisms).

\begin{definition}
	Let $E, F$ be  torsion free sheaves on $Y$ with parabolic structures 
over $I$.
	We say $F_*$ is  a parabolic subsheaf of $E_*$ if 
	\begin{enumerate}
		\item $F$ is a subsheaf of $E$ such that $E/F$ is torsion free,
		\item At all parabolic points $p\in I$, given $i_0$, $F_{i_0}(F_p)\subset F_j(E_p)$ for some $j$ and
		\item Let $j_0$ be such that $F_{i_0}(F_p)\subset F_{j_0}(E_p)$ and $F_{i_0}(F_p)\not\subset F_{j_0+1}(E_p)$, 
		then  $\beta_{i_0}=\alpha_{j_0}$ where  $\beta=(\beta_1,\dots,\beta_s)$ and $\alpha=(\alpha_1,\dots,\alpha_r)$ 
		denote the weights of $F_*$ and $E_*$ at $p$, respectively. 
	\end{enumerate}
\end{definition}

Let $E$ be a torsion free sheaf on $Y$ with parabolic structures over $I$ and $F\subset E$ be subsheaf of $E$ such that $E/F$ is torsion free.
Then the parabolic structure  on  $E$ induces a parabolic structure on $F$ and for each $p\in I$, we obtain a flag
$$F_p=F_0(F_p)\supset F_1(F_p):=F_1(E_p)\cap F_p\supset\dots\supset F_r(F_p):=F_r(E_p)\cap F_p\,.$$
The weight associated to $F_k(F_p)$, say $\beta_k$, is defined by $\beta_k:=\alpha_i$ where 
$F_i(E_p)$ is the smallest subspace such that $F_k(F_p)\subset F_i(E_p)$ and $\alpha=(\alpha_1,\dots,\alpha_r)$ is the weight of $E$ at $p$.

\begin{definition}
Let $E, G$ be  torsion free sheaves on $Y$ with parabolic structures over $I$. We say  $G_*$ is a parabolic quotient sheaf of $E_*$ if
	\begin{enumerate}
		\item there is a homomorphism $\phi:E\rightarrow G$ such that $G$ is a quotient sheaf of $E$ 
		under the homomorphism $\phi$, and
		\item at every parabolic vertex $p\in I $, for $1\leq k_0\leq n$, if $j_0$ 
		is such that $\phi(F_{j_0}(E_p))=F_{k_0}(G_p)$ and $\phi(F_{j_0+1}(E_p))\neq F_{k_0}(G_p)$,
		then $\alpha_{j_0}=\gamma_{k_0}$ where $\alpha=(\alpha_1,\dots,\alpha_r)$ and 
		$\gamma=(\gamma_1,\dots,\gamma_s)$ 
		denote the weights of $E_*$ and $G_*$ at $p$, respectively..
	\end{enumerate}
\end{definition}

 Let $E$ be a torsion free sheaf on $X$ with parabolic structures over $I$ and $\phi:E\to G$ be a quotient sheaf 
of $E$. Then $\phi_p:E_p\to G_p$ induces a flag on $G_p$ for each $p\in I$. The weight on $F_j(G_p)$ is $\alpha_i$ 
where $i$ is the largest integer such that $\phi(F_i(E_p))=F_j(F_P)$ and $\alpha=(\alpha_1,\dots,\alpha_r)$ is the weight of $E$ at $p$.
This induces a parabolic structure on $G$ which makes $G_*$ a parabolic quotient sheaf of $E_*$.

\begin{remark}\label{inpardirect}

Let $E_*$ and $F_*$ be parabolic vector bundles on $Y$ with parabolic structures
 over $I$ such that we have an exact sequence
$$0\to E\to G\to F\to 0$$ of vector bundles on $Y$. We can give a canonical parabolic structure on $G$, which can be  
constructed as follows: For any point $p\in I$, we choose an isomorphism 
$$G_p\cong E_{p}\oplus F_{p},$$
compatible with the previous exact sequence. Let
$$A=\{\alpha_1,\alpha_2,\dots, \alpha_r \}\cup \{\beta_1,\beta_2, \dots, \beta_s\}$$
where $\alpha_1,\alpha_2, \dots, \alpha_r $ and $\beta_1,\beta_2, \dots, \beta_s$ are weights of $E$ and $F$ at $p$,
respectively.
 Write $A= \{\gamma_1, \gamma_2, \dots, \gamma_t\}$ where  $\gamma_i$s  are arranged in the ascending order.

\noindent For $1\leq i\leq t$, we have
$$F_i(G_p)=F_j(E_{p})\oplus F_k(F_{p}),$$
where $j$ (resp. $k$) is the smallest integer such that $\gamma_i\leq \alpha_j$ (resp. $\gamma_i\leq \beta_k)$). Thus, we get a flag of $G$ of length $t$ at $p$ and  weights $\gamma_1, \dots, \gamma_t$.
One can easily verify that $E_*$ (resp. $F_*$) is a parabolic subbundle (resp. parabolic quotient bundle) of 
$G_*$ with its canonical structure. For more details, see \cite{CSS}.

\end{remark}

\begin{definition}\label{pardegdefn}
	Let $E_*$ be a parabolic  sheaf on $Y$ with parabolic structures  over $I$. Then the 
	parabolic degree is defined as
	\begin{equation*}
		{\rm par}\deg E_*=\deg E+\sum_{p\in I}(\alpha_1(p)k_1(p)+\dots + \alpha_{r_p}(p)k_{r_p}(p))
	\end{equation*}
	where $\alpha(p):=(\alpha_1(x),\alpha_2(p),\dots, \alpha_{r_p}(p)) $ are the 
	weights of $E$ at $p\in I$ with multiplicities $k(p):=(k_1(p),k_2(p),\dots, k_{r_p}(p))$ and $\deg E$ is 
	the degree of underlying torsion free sheaf $E$. 
\end{definition}

\begin{definition}\label{parstable}
	A parabolic  sheaf $E_*$ on $Y$ is said to be parabolic stable (resp. parabolic semistable) if, for every parabolic subsheaf $E'_*$ of $E_*$, we have
	\begin{equation*}
		\frac{{\rm par}\deg E'_*}{{\rm rk }\ E'}<({\rm resp.}\ \leq)\frac{{\rm par }\deg E_*}{\text{rk }E}.
	\end{equation*}
\end{definition}

\begin{remark}
It is easy to verify that a parabolic  sheaf $E_*$ on $Y$ is parabolic stable (resp. parabolic semistable)
if, for every torsion free quotient sheaf $G_*$ of $E_*$, we have
	\begin{equation*}
		\frac{{\rm par}\deg E_*}{{\rm rk }\ E}<({\rm resp.}\ \leq)\frac{{\rm par }\deg G_*}{\text{rk }G}.
	\end{equation*}
 
\end{remark}

\begin{definition}
 A parabolic vector bundle $E_*$  is $(l,m)$-stable (resp. $(l,m)$-semistable) if for every proper
subbundle $F_*$ of $E_*$, the inequality

\begin{equation*}
		\frac{{\rm par}\deg F_*+l}{{\rm rk }\ F}<({\rm resp.}\ \leq)\frac{{\rm par }\deg E_*+l-\text{rk }E}{\text{rk }E}.
	\end{equation*}
	holds.
	
\end{definition}

 \begin{remark}\label{PB}
 Given real numbers $0\leq\alpha_1<\dots< \alpha_r< 1$, integers $(l_1,\dots, l_r)$ and vector bundles $E$ and $F$ over $Y$, we can always choose a suitable  parabolic structure on $E$ and $F$ at $p\in Y$, using Remark \ref{inpardirect}, such that $G=E\oplus F$ 
 is a parabolic bundle with weights $\alpha_1<\dots, \alpha_r$ and flag type $(l_0, l_1,\dots, l_r)$ at $p$. 
 Hence, we can always find  parabolic bundles $E_*$ and $F_*$ such that 
 $0< {\rm par}\deg E_*\leq 1, -1\leq {\rm par}\deg F_*<0$, and  ${\rm par}\deg (G_*)=0$.
 \end{remark}

\subsection{Topological Results}\hfill

We also recall some topological facts related to nodal curve $Y$ over $\mathbb{C}$. 
We use $\pi_1()$ to denote the topological fundamental group.

\begin{theorem}\label{fungrp}
	Let $Z$ be a topological space and  $Z=Z_1\cup Z_2$ is the union of two arcwise connected open subsets $Z_1$ and $Z_2$. If 
	$Z_1\cap Z_2$ $=A\cup B$ is the union of two arcwise connected non-empty disjoint open sets 
	$A$ and $B$ and if $Z_2$, $A$, and $B$ are simply connected, then
	\begin{equation*}
		\pi_1(Z)\cong \pi_1(Z_1)* \mathbb{Z}.
	\end{equation*}
\end{theorem}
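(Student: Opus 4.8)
The plan is to compute $\pi_1(Z)$ by the Seifert--van Kampen theorem applied to the open cover $\{Z_1,Z_2\}$. The crucial feature of the hypotheses is that the overlap $Z_1\cap Z_2=A\cup B$ is \emph{disconnected}, so the classical form of the theorem, which assumes a path-connected intersection, does not apply directly; it is precisely this disconnectedness that will produce the extra free factor $\Z$. I would therefore work with the version of van Kampen phrased through the fundamental groupoid on a chosen set of basepoints meeting every path-component of each piece.

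First I would fix basepoints $a_0\in A$ and $b_0\in B$ and set $S=\{a_0,b_0\}$; since $A$ and $B$ are arcwise connected, $S$ meets every path-component of $Z_1$, $Z_2$, and $Z_1\cap Z_2$. I would then record the three relevant groupoids on $S$. Because $A$ and $B$ are simply connected and lie in distinct components of the overlap, $\pi_1(Z_1\cap Z_2,S)$ is \emph{discrete}: two objects, only identity morphisms. Because $Z_2$ is arcwise connected and simply connected, $\pi_1(Z_2,S)$ is the contractible groupoid on $\{a_0,b_0\}$, carrying a unique morphism $\sigma\colon a_0\to b_0$ represented by any path $\gamma$ in $Z_2$ from $a_0$ to $b_0$. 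Finally, choosing a path $\delta$ in the arcwise connected $Z_1$ from $a_0$ to $b_0$ identifies $\pi_1(Z_1,S)$ with the vertex group $\pi_1(Z_1,a_0)$ together with the connecting isomorphism $[\delta]$.

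Next I would invoke van Kampen to realise $\pi_1(Z,S)$ as the pushout of groupoids
\begin{equation*}
\pi_1(Z_1,S)\longleftarrow \pi_1(Z_1\cap Z_2,S)\longrightarrow \pi_1(Z_2,S),
\end{equation*}
and read off the vertex group at $a_0$. Since the amalgamating groupoid contributes no morphisms beyond identities and $Z_2$ contributes only the single isomorphism $\sigma$, the pushout adjoins to $\pi_1(Z_1,a_0)$ exactly one new generator, namely the based loop $\ell=[\gamma\cdot\overline{\delta}]$ obtained by traversing $\gamma$ in $Z_2$ and returning along $\delta$ in $Z_1$. No relation is imposed on $\ell$: relations in a van Kampen pushout arise only from loops in the overlap, and each component of $Z_1\cap Z_2$ is simply connected. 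Hence $\ell$ generates a free infinite cyclic factor and $\pi_1(Z,a_0)\cong\pi_1(Z_1,a_0)\ast\langle\ell\rangle\cong\pi_1(Z_1)\ast\Z$. (As a sanity check, taking $Z=S^1$ with $Z_1,Z_2$ two overlapping arcs recovers $\pi_1(S^1)\cong 1\ast\Z\cong\Z$.)

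The main obstacle is exactly the bookkeeping forced by the disconnected overlap: one must either pass to the fundamental groupoid as above, or else reduce to the connected-intersection case by bridging $A$ and $B$ with a tube built around $\gamma\subset Z_2$ and verifying that this bridge contributes precisely the generator $\ell$ and no relations. I would prefer the groupoid formulation, since it sidesteps the point-set difficulty of constructing such a tube in an arbitrary topological space while making transparent that the simple-connectivity of $Z_2$, $A$, and $B$ is what collapses the would-be amalgamated product down to the single free factor $\Z$.
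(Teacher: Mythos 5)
Your proof is correct, but it is not the paper's argument: the paper offers no proof of its own and simply cites de~Rham's lectures (Theorem 3.1 of \cite{DR}), where the statement is established by classical, hands-on methods --- subdividing loops and homotopies against the open cover $\{Z_1,Z_2\}$, i.e.\ a van Kampen-type argument adapted directly to the disconnected overlap. You instead give a self-contained proof via the groupoid form of Seifert--van Kampen on the basepoint set $S=\{a_0,b_0\}$: the overlap contributes the discrete groupoid on two objects, $Z_2$ contributes the tree groupoid with its unique morphism $\sigma\colon a_0\to b_0$, and the pushout's vertex group at $a_0$ is $\pi_1(Z_1,a_0)$ with the single new generator $\ell=[\gamma\cdot\overline{\delta}\,]$ adjoined freely, giving $\pi_1(Z_1)\ast\Z$. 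This is exactly the graph-of-groups picture (two vertices, two edges, all edge groups trivial), and it correctly isolates why the disconnectedness of $Z_1\cap Z_2$ produces the free factor $\Z$. Your route buys generality and transparency at no cost: Brown's groupoid theorem holds for arbitrary spaces with open covers, matching the theorem's generality, and it replaces the point-set ``tube/bridge'' construction you rightly distrust. The one step you state without proof --- that the pushout over a discrete groupoid imposes no relations on $\ell$ --- does require the normal-form (structure) theorem for such groupoid pushouts, equivalently Bass--Serre theory for this graph of groups; since that is standard machinery, it is a citation-level appeal rather than a gap, but you should name the reference rather than assert it as self-evident.
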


\begin{proof}
	See \cite[Theorem 3.1]{DR}.
\end{proof}

\begin{corollary}\label{fungrpcor}
	Let $Y$ be a nodal curve with one node and let $X$ be its normalisation. Let $P\in Y$ be a smooth point. Let $P$ also denote its unique preimage under the normalisation map. Then $\pi_1(Y-P)\cong\pi_1(X-P)*\mathbb{Z}$.
\end{corollary}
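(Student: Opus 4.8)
The plan is to reduce the corollary to a single application of Theorem \ref{fungrp}, applied not to $Y-P$ itself but to a homotopy equivalent space in which the node has been ``opened up'' into an arc. First I would record the topological model of $Y$. Since $p\colon X\to Y$ is the normalisation and $Y$ has exactly one node $q$, whose preimage is a pair of distinct points $x_1,x_2\in X$, the map $p$ is a homeomorphism over $Y\setminus\{q\}$ and sends both $x_1$ and $x_2$ to $q$. Thus, topologically, $Y=X/(x_1\sim x_2)$; and since $P$ is a smooth point with a single preimage (also written $P$), restriction gives a homeomorphism $Y-P\cong (X-P)/(x_1\sim x_2)$. Here $X-P$ is path connected, because $X$ is a connected compact Riemann surface and removing one point preserves connectedness, and $x_1\ne x_2$.

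The main obstacle is that Theorem \ref{fungrp} cannot be applied to $Y-P$ in any naive way: a local model of the node is two discs joined at the point $q$, so if $q$ lies in the simply connected piece $Z_2$, then deleting it (or a small arc through it) from $Z_1\cap Z_2$ yields punctured discs (resp.\ annular regions), none of which is simply connected, and the hypotheses on $A,B$ fail. I would circumvent this by the standard observation that identifying two points is homotopy equivalent to attaching an external arc. Concretely, let $M$ be obtained from $(X-P)\sqcup[0,1]$ by gluing the endpoints $0$ and $1$ of the arc $e\colon[0,1]\to M$ to $x_1$ and $x_2$, respectively. Collapsing the contractible arc $e$ gives $M/e=(X-P)/(x_1\sim x_2)=Y-P$; as $e\hookrightarrow M$ is a cofibration with $e$ contractible, the quotient map $M\to Y-P$ is a homotopy equivalence, so $\pi_1(Y-P)\cong\pi_1(M)$ and it suffices to compute $\pi_1(M)$.

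For the computation I would decompose $M=Z_1\cup Z_2$ by setting $Z_2=e((1/4,3/4))$, the open middle sub-arc, and $Z_1=(X-P)\cup e([0,1/3))\cup e((2/3,1])$. Both sets are open and arcwise connected; $Z_1$ deformation retracts onto $X-P$ by sliding its two half-open whiskers back to $x_1$ and $x_2$, so that $\pi_1(Z_1)\cong\pi_1(X-P)$, while $Z_2$ is an open interval and hence simply connected. Their intersection is $Z_1\cap Z_2=e((1/4,1/3))\sqcup e((2/3,3/4))$, a disjoint union of two nonempty open sub-arcs $A$ and $B$, each contractible and so simply connected. These are exactly the hypotheses of Theorem \ref{fungrp}, which then gives $\pi_1(M)\cong\pi_1(Z_1)* \mathbb{Z}\cong\pi_1(X-P)* \mathbb{Z}$; combined with $\pi_1(Y-P)\cong\pi_1(M)$ this is the assertion. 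As a consistency check one can note that $\pi_1(X-P)$ is free of rank $2g(X)$, so the formula makes $\pi_1(Y-P)$ free of rank $2g(X)+1=2g-1$, in agreement with the arithmetic genus $g=g(X)+1$ of a one-nodal curve. The only points needing care in a full write-up are the verification that $Z_1$ retracts onto $X-P$ and that the two overlap arcs are genuinely disjoint and open, both of which are routine.
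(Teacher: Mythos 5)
Your proposal is correct and follows essentially the same route as the paper: the paper also replaces $Y-P$ by the homotopy-equivalent space obtained from $X-P$ by attaching a handle from $x_1$ to $x_2$ (your arc $M$), notes that collapsing the handle recovers $Y-P$, and then applies Theorem \ref{fungrp} with ``a suitable choice of open subsets $Z_1$ and $Z_2$.'' Your write-up merely makes that suitable choice explicit (the whiskered $Z_1$ and the middle sub-arc $Z_2$), which the paper leaves to the reader.
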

\begin{proof}
 The proof is similar to that of \cite[ Result 1.7]{UB3}; we outline it here for the sake of
completeness. Let $y_0$ denote the nodal point of $Y$ and let $\{x_1, x_2\}$ denote its preimages
under the normalisation map $p: X \to Y\,.$ Let $Y'$  be the curve obtained from $X-P$ by
attaching a solid handle $h$ from $x_1$ to $x_2$. Then one obtains $Y-P$ from $Y'$ by contracting
the handle $h$ to a point $y_0$, and hence $\pi_1(Y-P) \cong \pi_1(Y')$. Observe that $Y'$
can be expressed as $Y'=X-P\cup h$. Applying Theorem \ref{fungrpcor} with a suitable choice of open subsets
$Z_1$ and $Z_2$,  we get the result.
\end{proof}

\begin{remark}
 If $Y$ has $n$ nodes, then $\pi_1(Y-P)\cong\pi_1(X-P)*\mathbb{Z}*\overset{n}{\cdots}*\mathbb{Z}$.
\end{remark}

\subsection{Representations and Parabolic bundles}\hfill\label{rep}

%%%%%%%%%%%%%%%%%%%%%%%%%%%%%%%%%%%%%%%%%%%%%%wikipedia and Simha Srinivas notes
There is an equivalence of categories between the category of smooth irreducible projective algebraic curves over $\C$ (with non-constant regular maps as morphisms) and the category of compact Riemann surfaces (with non-constant holomorphic maps as morphisms). 
Hence, one can use complex analytic methods in algebraic geometry and algebro-geometric methods in complex analysis when studying these objects.  In this note, we use this fact and consider $X$ as a compact Riemann surface and smooth projective curve, interchangeably.

In the section, we recall the construction of parabolic bundles from representations of the fundamental group of the punctured Riemann surface given by Mehta and Seshadri \cite{VMCS}. We follow their notations for convenience.

Let $X$ be a compact Riemann surface of genus $g(X)\geq2$. 
Then the upper half-plane $H=\{z\in \mathbb{C}\vert \ {\rm Im}(z)>0\}$ is the 
universal covering of $X$. Let $P\in X$ be a point of $X$. 
Consider the punctured Riemann surface $X-P$, its fundamental 
group $\Gamma=\pi_1(X-P)$ and its universal covering $H$. Adding the set $S_P$ of 
parabolic points of $\pi_1(X-P)$ to $H$, we get $H^+=H\cup S_P$, and 
we obtain $X$ as the quotient of $H^+$ by $\pi_1(X-P)$ (i.e., $X=H^+/\Gamma$). For $Q\in S_P$, 
the isotropy subgroup $\Gamma_Q$ of $Q$ for the action of $\pi_1(X-P)$ on $H^+$ is 
isomorphic to $\mathbb{Z}$; let $\tau_Q$ denote its generator. 
Let $\rho:\pi_1(X-P)\to GL(n)$ be a representation of $\pi_1(X-P)$ such that $\rho(\tau_P)=\exp(A_P)$ 
where $A_P$ is a diagonalizable matrix. 
Then the vector bundle $E_{\rho}$ on $X-P$ associated to $\rho$ can be extended to 
a vector bundle $E$ on $X$ using bounded functions on a neighborhood $U$ around $P$. 
Also, $E$ is endowed with a natural parabolic structure given by $A_P$ as follows. 
Since $A_P$ can be diagonalized, we get a diagonal matrix
$$
D=
\begin{pmatrix}
	(\alpha_1)I_{k_1} & & \\
	& \ddots & \\
	& & (\alpha_r)I_{k_r}
\end{pmatrix}
$$ 
where $I_{k_i}$ denote the identity matrix of rank $k_i$ and $0\leq\alpha_1<\dots<\alpha_r<1$. The matrix $D$ 
determines a flag of $E_P$ stabilised by it, giving $E$ the quasiparabolic structure at $P$. The weights of $E$ 
at $P$ are $(\alpha_1,\dots, \alpha_r)$ with multiplicities $(k_1,\dots,k_r)$, respectively, so that $E$ is 
naturally $E_*$.

 \begin{theorem}(Mehta-Seshadri Theorem)\label{MS}
 Let $X$ be a smooth projective curve, and $S$ be a finite set of smooth points of $X$.  
 There exist a bijective correspondence between the set of the isomorphism classes of stable parabolic 
 vector bundles of rank $n$ over $X$ with parabolic structures over $S$  and the set of equivalence classes 
 of irreducible unitary representation of $\pi_1(X-S)$.
 \end{theorem}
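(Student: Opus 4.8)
The plan is to exhibit maps in both directions between the two sets and show that they are mutually inverse bijections on isomorphism, respectively conjugacy, classes, where it is understood that the relevant parabolic bundles have parabolic degree zero. Throughout I would work with the Fuchsian uniformisation $X-S = H/\Gamma$ fixed in Section \ref{rep}, so that representations of $\Gamma = \pi_1(X-S)$ correspond to $\Gamma$-equivariant flat bundles on $H^+$, and the local behaviour at a parabolic point is governed by the image of the generator $\tau_Q$.

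For the forward map, let $\rho$ be an irreducible unitary representation with each $\rho(\tau_Q)$ having eigenvalues $\exp(2\pi i\,\alpha_j)$. The associated flat unitary bundle $E_\rho$ on $X-S$ extends to the parabolic bundle $E_*$ exactly as recalled in Section \ref{rep}, and two points must be verified. First, ${\rm par}\deg E_* = 0$: the connection is flat, so $\int_X \mathrm{tr}(F)=0$, and a Chern--Weil computation for the singular connection identifies the local contributions at the points of $S$ with the weighted multiplicities of Definition \ref{pardegdefn}, which therefore sum to zero. Second, stability: for any proper holomorphic parabolic subsheaf $V_*\subset E_*$, the second fundamental form of $V$ for the flat unitary metric gives, again by Chern--Weil, ${\rm par}\deg V_* \le 0$, with equality precisely when $V$ is a parallel (flat) subbundle, i.e. a subrepresentation. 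Irreducibility of $\rho$ excludes proper subrepresentations, so every proper $V_*$ satisfies ${\rm par}\deg V_* < 0$ and $E_*$ is parabolic stable in the sense of Definition \ref{parstable}.

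The substantial direction is the converse. Given a parabolic stable bundle $E_*$ of parabolic degree zero, I would seek a Hermitian metric $h$ on $E|_{X-S}$, singular along $S$ with growth dictated by the weights, whose Chern connection is flat and unitary; its holonomy then defines $\rho:\pi_1(X-S)\to U(n)$, and $\rho$ is irreducible because a proper invariant subspace would produce a parabolic subbundle of parabolic degree zero, violating stability. Producing $h$ is an existence problem for a Hermitian--Einstein type equation with prescribed parabolic singularities, and I would follow the original route of Mehta and Seshadri: pull back to the universal cover $H$, restrict attention to $\Gamma$-invariant metrics, and minimise the associated energy functional, parabolic stability being exactly the hypothesis under which a minimiser exists and yields a projectively flat connection, hence a flat one since the parabolic degree is zero. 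I expect the main obstacle to lie precisely here --- analysing the minimiser near the parabolic points, controlling its singular behaviour so that the limiting connection is genuinely flat and unitary with the correct local monodromy $\exp(2\pi i\,\alpha_j)$, and obtaining the required regularity. This is the parabolic analogue of the hard half of the Narasimhan--Seshadri theorem.

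Finally I would verify that the two constructions are mutually inverse and descend to the stated equivalence classes. Conjugate representations give isomorphic parabolic bundles, so the forward map is well defined on equivalence classes; conversely the flat unitary metric of the hard direction is unique up to a positive scalar, by the uniqueness of the Hermitian--Einstein metric forced by stability, so the monodromy is well defined up to conjugacy. Passing from $\rho$ to $E_*(\rho)$ and back via the holonomy of its canonical metric recovers $\rho$ up to conjugacy, while passing from $E_*$ to the flat bundle of its monodromy and back recovers both the underlying bundle $E$ and its parabolic flag and weights from the growth of flat sections at $S$. Hence the two maps are inverse bijections, establishing the correspondence.
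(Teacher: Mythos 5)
The paper does not prove Theorem \ref{MS} at all: it is the Mehta--Seshadri theorem, quoted from \cite{VMCS} as background for Section 3, so there is no internal proof to compare your attempt against line by line. Judged on its own terms, your sketch is a correct roadmap of the correspondence, and you are right to make explicit the parabolic-degree-zero normalisation, which the paper's statement leaves implicit. The forward direction (flat unitary bundle $\to$ parabolic extension as in Section \ref{rep}, degree zero by a Chern--Weil computation for the singular connection, stability via the second fundamental form plus irreducibility) is standard and sound in outline.

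The converse, however, is where the entire content of the theorem lies, and your text does not prove it: you reduce it to ``an existence problem for a Hermitian--Einstein type equation with prescribed parabolic singularities'' and then state that you expect the main obstacle to lie precisely there. Deferring the existence of the minimiser, its asymptotic behaviour near the punctures, and the regularity needed to conclude that the limiting connection is flat and unitary with local monodromy $\exp(2\pi i\,\alpha_j)$ is deferring the theorem itself; nothing in your outline closes that gap. A secondary point: the energy-minimisation route you describe is not ``the original route of Mehta and Seshadri'' --- their proof in \cite{VMCS} is moduli-theoretic, modelled on Seshadri's proof of the Narasimhan--Seshadri theorem (one shows the locus of stable parabolic bundles arising from irreducible unitary representations is both open and closed in the stable locus, with careful bookkeeping of weights); the singular Hermitian--Einstein/minimisation argument is the later Donaldson-style proof due to Biquard. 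Either route is legitimate, but as written your attempt is a program rather than a proof, whereas for the purposes of this paper the citation to \cite{VMCS} is the appropriate justification.
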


\subsection{Holomorphic Connections on Parabolic Bundles}\hfill

Let X be an irreducible smooth projective curve defined over $\mathbb{C}$. 
Let $$S = \{P_1, \dots, P_n\} \subset X$$ 
be $n$ distinct closed points of $X$.

A logarithmic connection on a vector bundle $E$ over $X$ with singularity over $S$ is a first order differential operator 
$$D:E\to K_X\otimes\mathcal{O}_X(S)\otimes E$$ satisfying the Leibnitz identity. The fiber $(K_X\otimes \mathcal{O}_X(S))_P$ 
is identified with $\mathbb{C}$. Given a logarithmic connection $D$, consider the composition
$$E\to K_X\otimes\mathcal{O}_X(S)\otimes E\to (K_X\otimes \mathcal{O}_X(S)\otimes E)_P=E_P.$$ 
This homomorphism of sheaves defines an endomorphism of the fiber $E_P$. This endomorphism is called the 
residue of $D$ at $P$ and is denoted by ${\rm Res}(D,P)$.

A holomorphic connection on $E_*$ is a logarithmic connection $D$ on $E$, singular over $S$, satisfying
\begin{enumerate}
	\item For any $P\in S$, the residue ${\rm Res}(D,P)$ preserves the filtration of $E_P$, and it is semi-simple;
	\item the action of ${\rm Res}(D,P)$ on $F_i(E_P)/F_{i-1}(E_P)$ is multiplication by the corresponding 
	parabolic weight $\lambda_i^P$. ($F_k(E_P)$ denotes the $k$th element in the filtration of $E_P$)
\end{enumerate}

For a representation $\rho:\pi_1(X-S)\to GL(n)$ of the fundamental group of $X-S$ such that the image 
of the generator $\tau_P$ of the isotropy group at $P$ is $\exp(A_P)$  
where $A_P$ is a semisimple element (diagonalizable) for all $P\in S$, one can associate a parabolic bundle 
of degree zero. It is well known that the vector bundle $E_{\rho}$ on $X-S$ admits a 
holomorphic connection since it comes from a representation (see \cite{AMF}). 
This connection on $E_{\rho}$ prolongs to a holomorphic connection on $E_*$ (a logarithmic connection with at worst logarithmic singularities along $S$) whose monodromy representation is $\rho$.
Conversely, a parabolic bundle $E_*$ which admits a holomorphic connection $D$ arises from the monodromy representation $\rho$ of $(E_*,D)$. Thus, one can conclude that a parabolic bundle arises from a representation if it admits a holomorphic connection. Hence, it is enough to prove the existence of a holomorphic connection on a parabolic bundle in order to know if it is associated to a representation.

\begin{remark}
	A parabolic bundle $E'_*$ is said to be a direct summand of $E_*$ if there is another parabolic bundle $E''_*$ such that $E_*$ is isomorphic to $E'_*\oplus E''_*$.	
\end{remark}

\noindent The parabolic analogue of Weil's criterion proved in \cite{IB} and \cite{IBM} precisely answers the condition for the existence of a holomorphic connection as follows. 

\begin{theorem}\label{WeilAnalogue}
	A parabolic vector bundle $E_*$ admits a holomorphic connection if and only if every direct summand of $E_*$ is 
	of parabolic degree zero. Equivalently,
	
A parabolic bundle arises from a representation if and only if every direct summand of $E_*$ is of parabolic degree zero. 
\end{theorem}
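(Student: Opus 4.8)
The plan is to adapt Atiyah's obstruction theory for holomorphic connections to the logarithmic, parabolic situation, reduce to indecomposable bundles by a Krull--Schmidt argument, and then detect the obstruction through its trace; since it was already explained before the theorem that a parabolic bundle arises from a representation exactly when it admits a holomorphic connection, it suffices to treat the connection criterion. First I would set up the parabolic Atiyah sequence. Let $\mathcal{A}t_S(E_*)$ be the sheaf of first--order differential operators on $E$ with scalar symbol, logarithmic poles along $S$, and residue compatible with the parabolic data; it sits in
\[
0 \longrightarrow \mathcal{E}nd_{\mathrm{par}}(E_*) \longrightarrow \mathcal{A}t_S(E_*) \xrightarrow{\ \sigma\ } T_X(-S) \longrightarrow 0,
\]
where $\mathcal{E}nd_{\mathrm{par}}(E_*)$ is the sheaf of parabolic endomorphisms and $T_X(-S)$ the logarithmic tangent sheaf. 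A holomorphic connection on $E_*$ is exactly an $\mathcal{O}_X$--linear splitting of $\sigma$, so one exists if and only if the associated extension class, the \emph{parabolic Atiyah class}
\[
a(E_*)\in \mathrm{Ext}^1\!\big(T_X(-S),\,\mathcal{E}nd_{\mathrm{par}}(E_*)\big)\cong H^1\!\big(X,\,\mathcal{E}nd_{\mathrm{par}}(E_*)\otimes K_X(S)\big),
\]
vanishes.

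Next I would reduce to indecomposable bundles. Parabolic bundles satisfy Krull--Schmidt, so write $E_* \cong \bigoplus_i E^{(i)}_*$ with each $E^{(i)}_*$ indecomposable. The class $a$ is functorial for parabolic direct sums: under the block decomposition of $\mathcal{E}nd_{\mathrm{par}}(E_*)$ the diagonal blocks recover the $a(E^{(i)}_*)$ while the off--diagonal blocks do not contribute, so $a(E_*)=0$ if and only if $a(E^{(i)}_*)=0$ for every $i$. Since any direct summand of $E_*$ is a direct sum of some of the $E^{(i)}_*$ and parabolic degree is additive, the hypothesis ``every direct summand has parabolic degree zero'' is equivalent to ``$\mathrm{pardeg}(E^{(i)}_*)=0$ for all $i$''. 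For the ``only if'' implication I would then invoke the logarithmic residue formula: if $E_*$ carries a holomorphic connection $D$, then $\deg E = -\sum_{P\in S}\mathrm{tr}\,\mathrm{Res}(D,P)$, and since $\mathrm{Res}(D,P)$ acts by the weight $\alpha_i^P$ on the $i$--th graded piece of dimension $k_i^P$, we get $\mathrm{tr}\,\mathrm{Res}(D,P)=\sum_i \alpha_i^P k_i^P$, hence $\mathrm{pardeg}(E_*)=\deg E+\sum_{P,i}\alpha_i^P k_i^P=0$. Applied to each summand (each admits a connection because $a(E^{(i)}_*)=0$), this yields the forward direction.

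The substance is the converse for indecomposable $E_*$, namely that $\mathrm{pardeg}(E_*)=0$ forces $a(E_*)=0$. Here I would run the parabolic analogue of Atiyah's lemma. Because $E_*$ is indecomposable, the algebra $H^0(X,\mathcal{E}nd_{\mathrm{par}}(E_*))$ is local, so every global parabolic endomorphism is a scalar plus a nilpotent. Pairing $a(E_*)$ against this algebra through the parabolic Serre duality for $H^1(X,\mathcal{E}nd_{\mathrm{par}}(E_*)\otimes K_X(S))$, and using that the trace of $a(E_*)$ equals a nonzero multiple of $\mathrm{pardeg}(E_*)$ (the same residue computation as above), one checks that $a(E_*)$ pairs to zero against every endomorphism: the nilpotent part contributes nothing and the scalar part contributes a multiple of $\mathrm{pardeg}(E_*)=0$. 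Non--degeneracy of the duality then gives $a(E_*)=0$, hence the connection.

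I expect the main obstacle to be precisely this last step: making the parabolic Serre duality precise and justifying that, for an indecomposable parabolic bundle, the obstruction is detected solely by its trace. The delicate points are selecting the correct endomorphism sheaf (parabolic versus strongly parabolic) so that the pairing is perfect against $K_X(S)$, verifying the identity $\mathrm{tr}\,a(E_*)=c\cdot\mathrm{pardeg}(E_*)$ with $c\neq 0$, and --- most seriously --- accommodating arbitrary real (in particular irrational) weights, for which the convenient passage to an orbifold or ramified--cover bundle and the classical Weil--Atiyah criterion is unavailable, so Atiyah's argument must be carried out intrinsically. For rational weights one could instead pass to a ramified Galois cover $\widetilde{X}\to X$, translate $E_*$ into a $\Gamma$--equivariant bundle, apply the equivariant Weil criterion there and descend, and then extend to real weights by a continuity or approximation argument. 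This is exactly the parabolic Weil criterion established in \cite{IB} and \cite{IBM}.
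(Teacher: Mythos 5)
The first thing to say is that the paper contains no proof of Theorem \ref{WeilAnalogue} to compare against: the theorem is imported verbatim from \cite{IB} and \cite{IBM}, and the only argument the paper itself supplies is the discussion in Section 2.4 identifying ``arises from a representation'' with ``admits a holomorphic connection'' via monodromy (which you also take as given). So your proposal has to be measured against the cited sources, and there your plan is essentially the standard one: an Atiyah--Weil style obstruction argument (Atiyah class, Krull--Schmidt reduction to indecomposables, locality of the endomorphism algebra, detection of the obstruction by its trace via Serre duality), with the ramified-cover/orbifold correspondence as the alternative route for rational weights --- which is exactly the division of labour you describe between \cite{IBM} and \cite{IB} in your last paragraph. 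In that sense you have reconstructed the approach of the literature the paper relies on, not a new one.

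There is, however, one concrete error in your setup, and it is precisely the ``parabolic versus strongly parabolic'' point you flagged but left unresolved --- left as written, your final step fails. The kernel of the relevant Atiyah sequence cannot be $\mathcal{E}nd_{\mathrm{par}}(E_*)$: two logarithmic connections whose residues both act by $\alpha_i^P$ on $F_i(E_P)/F_{i+1}(E_P)$ differ by a section of $\mathcal{N}\otimes K_X(S)$, where $\mathcal{N}$ is the sheaf of \emph{strongly} parabolic endomorphisms (those whose value at each $P\in S$ maps $F_i(E_P)$ into $F_{i+1}(E_P)$). So the obstruction lives in $H^1(X,\mathcal{N}\otimes K_X(S))$, not in $H^1(X,\mathcal{E}nd_{\mathrm{par}}(E_*)\otimes K_X(S))$. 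This matters because Serre duality exchanges the two sheaves: the dual of $H^1(X,\mathcal{E}nd_{\mathrm{par}}(E_*)\otimes K_X(S))$ is the space of \emph{global strongly parabolic} endomorphisms, and every such endomorphism is nilpotent (its characteristic polynomial has constant coefficients, which can be evaluated at $P$ where the endomorphism is nilpotent); in particular the identity is not among them, so pairing your class against that space can never see the trace or ${\rm par}\deg E_*$, and the scalar-versus-nilpotent dichotomy you invoke has nothing to act on. With the corrected placement, the Serre dual of $H^1(X,\mathcal{N}\otimes K_X(S))$ is $H^0(X,\mathcal{E}nd_{\mathrm{par}}(E_*))$ --- via the trace pairing $\mathcal{N}\otimes\mathcal{E}nd_{\mathrm{par}}(E_*)\to\mathcal{O}_X(-S)$ --- which does contain the identity and is a local ring when $E_*$ is indecomposable, and then your argument runs as intended, modulo proving the parabolic analogue of Atiyah's additivity lemma (that nilpotent parabolic endomorphisms pair to zero, via the kernel filtration). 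These repairs, together with the treatment of arbitrary real weights, are exactly the content carried out in \cite{IBM}.
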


\subsection{Generalised Parabolic Sheaves with Parabolic Structures}\label{SecGPB}\hfill 

 In this section, we recall the notion of `` generalised parabolic sheaf'' (GPS) defined in \cite{MSTR} and \cite{XS}.

\noindent Unless otherwise mentioned, $Y$ denotes an irreducible nodal curve over $\mathbb{C}$ with exactly one node $y_0$ and $p:X\rightarrow Y$ denotes its normalisation with $p^{-1}(y_0)=\{x_1,x_2\}$. Let $E$ be a coherent sheaf on $X$, torsion free sheaf outside $\{x_1,x_2\}$ such that $\text{rk }E=r>0$.

\begin{definition}
	A generalised parabolic structure on a sheaf $E$ over the divisor $\{x_1,x_2\}$ is a choice of an $r$-dimensional 
	quotient $Q$ of $E_{x_1}\oplus E_{x_2}$. A sheaf with a generalised parabolic structure is called a GPS.
\end{definition}

\begin{definition}
	A GPS $(E,Q)$ is said to be stable (resp. semistable) if for every proper subsheaf $E'$ such that $E/E'$ is 
	torsionfree outside $\{x_1,x_2\}$ we have 
	\begin{equation*}
		\frac{\deg E'-\dim Q^{E'}}{\text{rk }E'}<(\text{resp. }\leq)\frac{\deg E-\text{rk }E}{\text{rk }E} 
	\end{equation*}
	where $Q^{E'}$ denotes the image of $E'_{x_1}\oplus E'_{x_2}$ in $Q$.
\end{definition}
 
 \begin{definition}
  A generalised parabolic sheaf on $X$ with parabolic structures at finitely many smooth points $p_1,\dots,p_n$ of $X$ is a generalised parabolic sheaf $(E,Q)$ over $X$ where $E$ has parabolic structures at  $p_1,\dots, p_n$. We use $(E_*,Q)$ to denote it.
 \end{definition}

\begin{definition}
	A generalised parabolic sheaf $(E_*,Q)$ with parabolic structures at a finite set of smooth 
  points $p_1,\dots, p_n$ of $X$ is said to be stable (resp. semistable) 
% % 	with respect to the weights $\alpha$ 
	if for every non-trivial subsheaf $E'$ such that $E/E'$ is torsionfree outside $\{x_1,x_2\}$, we have
	\begin{equation}
		\frac{{\rm par}\deg E'_*-\dim Q^{E'}}{{\rm rk}\ E'}<(\text{resp. }\leq)\frac{{\rm par}\deg E_*-\dim Q}{{\rm rk}\ E}
	\end{equation}
	where $Q^{E'}$ denotes the image of $E'_{x_1}\oplus E'_{x_2}$ in $Q$.
\end{definition}

Note that, $p: X-\{x_1,x_2\}\to Y-{y_0}$ is a bijection. Hence, for any finite subset $I$ of smooth points of $Y$ 
we can identify $I$ with $p^{-1}(I)$. Given a generalized parabolic sheaf $(E_*,Q)$ on $X$ with parabolic 
structures over $I$, one obtains a sheaf $F$ on $Y$ with parabolic structures over $I$ 
which fits into the following exact sequence:
\begin{equation}\label{ExactQPB}
	0\rightarrow F\rightarrow p_*E\rightarrow Q_{y_0}\rightarrow 0
\end{equation}
where $Q_{y_0}$ is the skyscraper sheaf on $Y$ with support at $y_0$ and fibre $Q$.

%%%%%%%%%%%%%%%%%%%%%%%%%%%%%%%%%%%%% 

\begin{remark}\label{BhSun}
 When $E$ is a locally free sheaf, a GPS $(E_*,Q)$ gives rise to an exact sequence
	$$0\to F_1(E)\to E_{x_1}\oplus E_{x_2}\xrightarrow{q} Q\to 0$$
	where $\dim F_1(E)={\rm rk}\ E$. 
	
	\noindent In works of Bhosle, the pair $(E, F_1(E))$ is called
	a Generalised parabolic bundle (GPB). We say a GPB $(E_*,F_1(E))$ is stable (resp. semi stable), if 
	for any subsheaf $E'$ of $E$ such that $E/E'$ is torsion free sheaf, we have
	\begin{equation}
	\frac{{\rm par}\deg E'_*+\dim F_1(E')}{{\rm rk}\ E'}<(\text{resp. }\leq)\frac{{\rm par}\deg E_*+\dim F_1(E)}{{\rm rk}\ E}
	\end{equation}
	where $F_1(E')= E'_{x_1}\oplus E'_{x_2}\cap F_1(E)$. 	
	
	\noindent It it easy to verify that the stability (resp. semi stability) $(E_*,Q)$ is equivalent to the stability (resp. semi stability)
	of $(E,F_1(E))$. 
\end{remark}

\begin{notation}\label{GPBBhosle}
 We also call $(E,Q)$ and  $(E_*, Q)$ a GPB if $E$ is locally free. We denote the composition map $F_1(E)\to E_{x_1}\oplus E_{x_2} \to E_{x_i}$ by $p_i$ and the composition map $E_{x_i} \to E_{x_1}\oplus E_{x_2}\xrightarrow{q_i} Q$ by $q_i$ for $i=1,2$.
\end{notation}

\begin{lemma}\label{GPBcorresp}\hfill
	\begin{enumerate}
		\item Let $(E_*,Q)$ be a GPB over $X$ such that the natural maps $q_i: E_{x_i}\to Q$
		are isomorphisms and let $F$ be the associated sheaf on $Y$. Then $F$ is a locally free sheaf.
		\item If $F_*$ is a parabolic vector bundle on $Y$, there is a unique GPB $(E_*,Q)$ 
		which gives $F_*$. In fact, $E=p^*F$.
	\end{enumerate}
\end{lemma}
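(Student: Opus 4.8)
The plan is to reduce everything to a local analysis at the node $y_0$, since away from $y_0$ the normalisation map $p$ is an isomorphism: on $X\setminus\{x_1,x_2\}\xrightarrow{\sim}Y\setminus\{y_0\}$ the sheaf $F$ agrees with $E$ (the skyscraper $Q_{y_0}$ is supported at $y_0$), and the parabolic structures over $I$ live at smooth points, so they transfer back and forth through this isomorphism without difficulty. Thus in both parts the only point to check is the behaviour of the underlying sheaf at $y_0$. Writing $\mathcal{O}_i=\mathcal{O}_{X,x_i}$ and $A=\mathcal{O}_{Y,y_0}$, I would use that $A$ is the subring of $\mathcal{O}_1\times\mathcal{O}_2$ of pairs of functions agreeing at the node, and that after trivialising $E$ near $x_1,x_2$ the stalk $(p_*E)_{y_0}$ is $\mathcal{O}_1^{\,r}\oplus\mathcal{O}_2^{\,r}$, with the map to $Q_{y_0}$ sending a section $(s_1,s_2)$ to $q(s_1(x_1),s_2(x_2))$.

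For part (1), I would identify $F_{y_0}$ as the kernel of that map. Since the two maps $q_i$ are isomorphisms, the kernel is exactly $\{(s_1,s_2)\colon s_1(x_1)=\phi(s_2(x_2))\}$, where $\phi=-q_1^{-1}q_2$ is an isomorphism of the fibres. Lifting the constant matrix $\phi$ to an automorphism of $\mathcal{O}_2^{\,r}$, I may assume $\phi=\mathrm{id}$, after which $F_{y_0}=\{(s_1,s_2)\colon s_1(x_1)=s_2(x_2)\}$ is visibly identified with $A^{\,r}$. Hence $F$ is free of rank $r$ at $y_0$, and therefore locally free everywhere.

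For part (2), I would take $E=p^*F$, which is locally free of rank $r$, equipped with the pulled-back parabolic structure, and set $Q=F(y_0)$ (the fibre at the node) with $q(v_1,v_2)=v_1-v_2$ under the canonical identifications $E_{x_1}\cong F(y_0)\cong E_{x_2}$; then $q_1,q_2$ are isomorphisms and $F_1(E)$ is the diagonal, of dimension $r$, so $(E_*,Q)$ is a GPB. To see that its associated sheaf is $F$, I would use the unit of adjunction $F\to p_*p^*F$ and verify locally that its cokernel is the skyscraper $F(y_0)$ at $y_0$ with quotient map $(s_1,s_2)\mapsto s_1(x_1)-s_2(x_2)$, giving precisely the defining sequence $0\to F\to p_*E\to Q_{y_0}\to 0$. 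For uniqueness, given any GPB $(E'_*,Q')$ with associated sheaf $F_*$, the inclusion $F\hookrightarrow p_*E'$ corresponds by adjunction to a morphism $\psi\colon p^*F\to E'$ of locally free sheaves of the same rank; it is automatically an isomorphism away from the node, and I would check that it induces an isomorphism on the fibres at $x_1,x_2$ (this forces each $q_i'$ to be an isomorphism), whence $E'\cong p^*F$ and $Q'$ is then determined as the cokernel $F(y_0)$.

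The main obstacle is the local freeness computation at the node: one must see precisely that the glued module $\{(s_1,s_2)\colon s_1(x_1)=\phi(s_2(x_2))\}$ is \emph{free} over the nodal local ring $A$ exactly when the gluing datum $\phi$ — equivalently each $q_i$ — is an isomorphism, rather than merely torsion-free. This is where the hypothesis in (1) is used, and where the converse implication needed for the uniqueness statement in (2) must be extracted.
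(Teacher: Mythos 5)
Your overall strategy --- reduce everything to a local computation at the node --- is sound, and it is worth noting that the paper itself gives no argument for this lemma: it simply cites \cite[Lemma 4.6]{MSTR} and \cite[Lemma 2.1]{XS}, whose proofs are local computations of exactly the kind you set up. Your part (1) and the existence half of part (2) are essentially correct, up to one imprecision you should repair: the Zariski stalk $(p_*E)_{y_0}$ is not $\mathcal{O}_1^{\,r}\oplus\mathcal{O}_2^{\,r}$ but $\widetilde{A}^{\,r}$, where $\widetilde{A}=\mathcal{O}_{X,x_1}\cap\mathcal{O}_{X,x_2}\subset k(X)$ is the semilocal ring of the two preimages (a domain, hence not a product of two rings). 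Your product description is the formal (or analytic) picture; since freeness at $y_0$ can be checked after the faithfully flat base change $A\to\widehat{A}=\{(f_1,f_2)\in\widehat{\mathcal{O}}_1\times\widehat{\mathcal{O}}_2 : f_1(x_1)=f_2(x_2)\}$ and completion is exact, your computation is valid verbatim over $\widehat{A}$ --- but this reduction has to be stated.

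The genuine gap is in the uniqueness half of part (2). You write that you ``would check'' that $\psi\colon p^*F\to E'$ is an isomorphism on the fibres at $x_1,x_2$, and your closing paragraph concedes that the converse implication (local freeness of $F$ forces the $q_i'$ to be isomorphisms) ``must be extracted'' --- but it never is, and that converse is precisely the nontrivial content of uniqueness. Here is the missing count. Set $W=\ker\bigl(q'\colon E'_{x_1}\oplus E'_{x_2}\to Q'\bigr)$, so $\dim W=r$, and let $\pi_i(W)\subseteq E'_{x_i}$ denote its two projections. Over $\widehat{A}$ one has $\widehat{F}_{y_0}=\{(s_1,s_2) : (s_1(x_1),s_2(x_2))\in W\}$, and filtering by $\widehat{F}_{y_0}\supset \mathfrak{m}_1\widehat{\mathcal{O}}_1^{\,r}\oplus\mathfrak{m}_2\widehat{\mathcal{O}}_2^{\,r}\supset \mathfrak{m}\widehat{F}_{y_0}$ gives $\dim_{\C}F(y_0)=r+\bigl(r-\dim\pi_1(W)\bigr)+\bigl(r-\dim\pi_2(W)\bigr)$. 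Since $\widehat{F}_{y_0}$ has rank $r$ along each branch, it is free if and only if its fibre has dimension $r$ (by Nakayama one gets a surjection $\widehat{A}^{\,r}\to\widehat{F}_{y_0}$, whose kernel is supported at the closed point and hence vanishes inside $\widehat{A}^{\,r}$), i.e.\ if and only if $\pi_i(W)=E'_{x_i}$ for $i=1,2$. But $\pi_1(W)=E'_{x_1}$ says exactly ${\rm Im}\,q_1'\subseteq{\rm Im}\,q_2'$, and symmetrically; combined with the surjectivity of $q'$ this forces ${\rm Im}\,q_1'={\rm Im}\,q_2'=Q'$, so each $q_i'$ is a surjection between $r$-dimensional spaces, hence an isomorphism. (This count also re-proves your part (1); alternatively one can invoke the classification of finitely generated torsion-free modules over the nodal local ring as $\widehat{A}^{\,a}\oplus\widehat{\mathcal{O}}_1^{\,b}\oplus\widehat{\mathcal{O}}_2^{\,c}$ and note that freeness means $b=c=0$.) Once this is in place, your adjunction map $\psi$ is an isomorphism on the fibres over $x_1,x_2$, hence everywhere, since its cokernel is a torsion sheaf supported there, and $Q'$ is identified with the cokernel of $F\to p_*E'$; uniqueness, and the assertion $E=p^*F$, then follow as you indicate.
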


\begin{proof}
	See \cite[Lemma 4.6]{MSTR} or \cite[Lemma 2.1]{XS}.
\end{proof}
% No Problem by SUN
\begin{proposition}\label{GPB}	Let $(E_*,Q)$ be a GPS over $X$ and $F_*$ denotes its associated parabolic sheaf on $Y$. Then 
 $F_*$ is semistable if and only if $(E_*,Q)$ is semistable. Moreover, one has
\begin{enumerate}
		\item If $F_*$ is a parabolic stable bundle, then $(E_*,Q)$ is stable GPB;
		\item If $(E_*,Q)$ is stable, then $F_*$ is  stable.
		\end{enumerate}
\end{proposition}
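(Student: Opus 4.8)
The plan is to collapse both notions of (semi)stability into a single numerical inequality by setting up a dictionary between the admissible subobjects of $(E_*,Q)$ and the parabolic subsheaves of $F_*$, under which the two slopes differ by a universal constant. Throughout write $r=\operatorname{rk}E$ and let $\mu_{\mathrm{par}}(\cdot)=\operatorname{par}\deg(\cdot)/\operatorname{rk}(\cdot)$ denote the parabolic slope.

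First I would record the degree comparison. Since $p$ is finite one has $\chi(Y,p_*E)=\chi(X,E)$, and because the arithmetic genus of $Y$ exceeds the genus of $X$ by the number of nodes (one, here), Riemann--Roch on $X$ and on $Y$ gives $\deg(p_*E)=\deg E+r$; the same computation applied to any coherent subsheaf $E'\subset E$ of rank $r'$ that is torsion-free outside $\{x_1,x_2\}$ yields $\deg(p_*E')=\deg E'+r'$. Feeding this into the defining sequence \eqref{ExactQPB} for the associated sheaf, together with its analogue $0\to F'\to p_*E'\to Q^{E'}_{y_0}\to 0$ for a sub-GPS $(E',Q^{E'})$, and observing that the parabolic points lie in the smooth locus where $p$ is an isomorphism (so the weight contributions to $\operatorname{par}\deg$ are literally identical for $E'_*$ and its image $F'_*$), I obtain the key identity
\[
\operatorname{par}\deg F'_*=\operatorname{par}\deg E'_*+\operatorname{rk}E'-\dim Q^{E'},\qquad \operatorname{rk}F'=\operatorname{rk}E',
\]
and in particular $\operatorname{par}\deg F_*=\operatorname{par}\deg E_*$ (take $E'=E$, $\dim Q=r$). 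Consequently
\[
\frac{\operatorname{par}\deg E'_*-\dim Q^{E'}}{\operatorname{rk}E'}=\mu_{\mathrm{par}}(F'_*)-1,
\]
so the GPS-slopes of $E'$ and of $E$ are the parabolic slopes of $F'_*$ and of $F_*$ shifted by the same constant $-1$; hence the strict (resp.\ non-strict) GPS inequality for $(E',Q^{E'})$ holds if and only if $\mu_{\mathrm{par}}(F'_*)<\mu_{\mathrm{par}}(F_*)$ (resp.\ $\le$). It then remains to match the families of test objects: a sub-GPS $E'\subset E$ produces $F'=F\cap p_*E'\subset F$ with torsion-free quotient, and conversely a parabolic subsheaf $F'\subset F$ produces $E'\subset E$ as (the saturation of) the image of $p^*F'\to p^*p_*E\to E$. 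I would invoke the correspondence of \cite[Lemma 4.6]{MSTR} and \cite[Lemma 2.1]{XS}, together with Remark \ref{BhSun}, to see that these constructions are mutually inverse on subsheaves with torsion-free quotient and are compatible with the induced parabolic and quotient data.

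The one genuinely delicate point --- the step I expect to be the main obstacle --- is a discrepancy in admissible test objects: GPS-stability allows $E'$ with $E/E'$ torsion only at the nodes (so possibly $\operatorname{rk}E'=r$), whereas parabolic stability of $F_*$ tests only subsheaves with globally torsion-free quotient (forcing $\operatorname{rk}F'<\operatorname{rk}F$ for proper $F'$). To bridge this I would first show that passing from $E'$ to its saturation $\overline{E'}$ in $E$ never decreases the GPS-slope: saturation raises $\deg E'$ by the length $\ell$ of the torsion of $E/E'$ (supported at the nodes, hence leaving the weights at the smooth parabolic points untouched), while $\dim Q^{\overline{E'}}-\dim Q^{E'}\le\ell$ because the extra image in $Q$ is generated by the length-$\ell$ torsion sheaf $\overline{E'}/E'$; thus the GPS-numerator does not drop. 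It therefore suffices to test GPS-stability on saturated $E'$, and these have $\operatorname{rk}E'<r$ when proper, so they correspond exactly to the proper parabolic subsheaves of $F_*$. This reduction makes the dictionary slope-faithful on matching families and yields the equivalence of semistability at once.

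For the stable refinements I would argue as follows. If $(E_*,Q)$ is stable, then every proper parabolic subsheaf $F'\subset F$ lifts to a proper saturated sub-GPS on which the strict GPS-inequality gives $\mu_{\mathrm{par}}(F'_*)<\mu_{\mathrm{par}}(F_*)$, proving assertion $(2)$. Conversely, if $F_*$ is a stable bundle then $E=p^*F$ is locally free by Lemma \ref{GPBcorresp}(2), so $(E_*,Q)$ is a GPB; since GPB-stability (Remark \ref{BhSun}) tests only torsion-free quotients, which correspond precisely to the proper parabolic subsheaves of $F_*$, the strict inequalities for $F_*$ transfer verbatim and give stability of $(E_*,Q)$, which is assertion $(1)$.
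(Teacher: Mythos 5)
Your proposal is essentially correct, but there is nothing in the paper to compare it against step by step: the paper's own ``proof'' of Proposition~\ref{GPB} is a citation to \cite[Proposition 4.7]{MSTR} and \cite[Lemma 2.2]{XS}. What you have written is, in effect, a self-contained reconstruction of the argument in those references: the identity $\deg(p_*E')=\deg E'+\operatorname{rk}E'$ (from $\chi(Y,p_*E')=\chi(X,E')$ and $p_a(Y)=g(X)+1$), the resulting dictionary $\operatorname{par}\deg F'_*=\operatorname{par}\deg E'_*+\operatorname{rk}E'-\dim Q^{E'}$ coming from the defining sequence \eqref{ExactQPB}, and the observation that the GPS-slope of a sub-GPS equals the parabolic slope of the associated subsheaf of $F_*$ shifted by $-1$ --- this is exactly the mechanism of the cited proofs. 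So your route coincides with the one the paper outsources; what it buys is an actual proof in the paper's notation rather than a pointer to the literature.

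Two of your claims are overstated, though neither is load-bearing as you deploy them. First, the constructions $E'\mapsto F\cap p_*E'$ and $F'\mapsto \operatorname{im}(p^*F'\to E)$ are \emph{not} mutually inverse in general (if $q_1(E'_{x_1})\not\subseteq q_2(E'_{x_2})$, composing them returns a sheaf strictly smaller than $E'$ near $x_1$); luckily your argument never uses invertibility, only that each construction produces a test object whose slope inequality implies the desired one, and that part is sound. Second, the reduction ``it suffices to test GPS-stability on saturated $E'$'' is false for \emph{stability}: a proper full-rank subsheaf $E'\subsetneq E$ with torsion quotient at the nodes saturates to $E$ itself, so saturation yields only the non-strict inequality. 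This is not a pedantic point: for a degenerate quotient, say $Q=E_{x_1}$, the subsheaf $E'=\ker\bigl(E\to k(x_1)\bigr)$ gives exact equality of GPS-slopes, so GPS-stability can genuinely fail even when every saturated proper subsheaf satisfies the strict inequality. You implicitly dodge this because assertion $(2)$ uses GPS-stability only as a hypothesis, and in assertion $(1)$ you conclude GPB-stability, whose test objects are saturated by definition (Remark~\ref{BhSun}); that matches the statement, which says ``stable GPB'' and assumes $F_*$ locally free. But note that the equivalence of GPB- and GPS-stability asserted in Remark~\ref{BhSun} is precisely where local freeness of $F$ enters: when $q_1,q_2$ are isomorphisms (Lemma~\ref{GPBcorresp}), a full-rank $E'$ with torsion of length $\ell_i$ at $x_i$ satisfies $\dim Q^{E'}>\operatorname{rk}E-\ell_1-\ell_2$, because either some fibre $E'_{x_i}$ equals $E_{x_i}$ and surjects onto $Q$, or both $\ell_i\geq 1$ and already $q_1(\operatorname{im}E'_{x_1})$ has dimension at least $\operatorname{rk}E-\ell_1$. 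Adding that one-line check would make your proof of $(1)$ yield GPS-stability directly, without leaning on the unproved remark.
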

 
 \begin{proof}
 		See  \cite[Proposition 4.7]{MSTR} or \cite[Lemma 2.2]{XS}.
  \end{proof}

\begin{proposition}\label{bijection}
	Let $R$ denote the isomorphism classes of parabolic vector bundles $F_*$ on $Y$ (for simplicity, assume parabolic structure at a single point $P$) with ${\rm rk}\ F=n$ and ${\rm par}\ \deg F_*=0$. Now, let $T$ denote the set of isomorphism classes of GPBs $(E_*,Q)$ on $X$ with the same parabolic structure at $P$ such that ${\rm rk }\ E=n$, ${\rm par}\deg E_*=0$, and $\dim Q=n$ with the maps $E_{x_j}\rightarrow Q$ isomorphisms. Then, there is a bijective correspondence $f:T\rightarrow R$. If $f(E_*,Q)=F_*$, then $E=p^*F$.
	
	Under the correspondence $f$, semistable (resp. stable) GPBs on $X$ correspond to parabolic semistable (resp. parabolic stable) bundles on $Y$.  
\end{proposition}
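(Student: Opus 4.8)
The plan is to exhibit $f$ together with an explicit inverse, deducing almost everything from Lemma \ref{GPBcorresp} and Proposition \ref{GPB}, so that the only genuinely new work is one parabolic-degree bookkeeping check. Given $(E_*,Q)\in T$, I would define $f(E_*,Q)=F_*$ to be the parabolic sheaf associated to $(E_*,Q)$ through the defining exact sequence \eqref{ExactQPB}, namely $0\to F\to p_*E\to Q_{y_0}\to 0$, where the parabolic structure of $F_*$ at $P$ is the one transported from $E_*$ over $I=\{P\}$ (this is legitimate since $P\neq y_0$ and $p$ is an isomorphism on a neighbourhood of $P$). Because by hypothesis the natural maps $q_i\colon E_{x_i}\to Q$ are isomorphisms, Lemma \ref{GPBcorresp}(1) guarantees that $F$ is locally free, so $F_*$ is a genuine parabolic vector bundle with $\mathrm{rk}\,F=n$.

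The one computation that is not immediate is that $\mathrm{par}\deg F_*=0$, i.e. that $f$ actually lands in $R$. I would carry this out with Euler characteristics: finiteness of $p$ gives $\chi(Y,p_*E)=\chi(X,E)$, and the skyscraper $Q_{y_0}$ has $\chi(Q_{y_0})=\dim Q=n$, so \eqref{ExactQPB} yields $\chi(Y,F)=\chi(X,E)-n$. Feeding in Riemann--Roch on $X$, $\chi(X,E)=\deg E+n(1-g(X))$, and on $Y$, $\chi(Y,F)=\deg F+n(1-g)$, together with the genus relation $g=g(X)+1$ for a one-nodal curve, I obtain $\deg F=\deg E$. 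Since the weight data at $P$ is identical on both sides, this upgrades to $\mathrm{par}\deg F_*=\mathrm{par}\deg E_*=0$, so $F_*\in R$.

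For the inverse I would set $\psi(F_*)=(p^*F_*,Q)$, the unique GPB producing $F_*$ furnished by Lemma \ref{GPBcorresp}(2), and verify that it lies in $T$. Here $E=p^*F$ has rank $n$ and, reading the degree identity backwards, $\mathrm{par}\deg E_*=0$; moreover $(p^*F)_{x_1}=(p^*F)_{x_2}=F_{y_0}$ and the subspace $F_1(E)\subset E_{x_1}\oplus E_{x_2}$ of Remark \ref{BhSun} is the diagonal copy of $F_{y_0}$, whence $\dim Q=n$ and each projection $q_i\colon E_{x_i}\to Q=(F_{y_0}\oplus F_{y_0})/\Delta$ is an isomorphism. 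That $f$ and $\psi$ are mutually inverse is then forced by the uniqueness clause of Lemma \ref{GPBcorresp}(2): any $(E_*,Q)\in T$ produces a vector bundle $F_*=f(E_*,Q)$ by part (1), and uniqueness in part (2) forces this very $(E_*,Q)$ to be the GPB $\psi(F_*)$, so $E=p^*F$ and $\psi\circ f=\mathrm{id}_T$, while $f\circ\psi=\mathrm{id}_R$ is exactly the ``which gives $F_*$'' assertion. This also establishes the claimed identity $E=p^*F$.

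Finally, the stability statement should be immediate from Proposition \ref{GPB}: that proposition gives $F_*$ semistable $\Leftrightarrow$ $(E_*,Q)$ semistable, while its parts (1) and (2) combine to give $F_*$ parabolic stable $\Leftrightarrow$ $(E_*,Q)$ stable, which is precisely the correspondence transported along $f$. I expect the only real friction to be the parabolic-degree verification in the second paragraph---keeping the genus shift $g=g(X)+1$ and the normalisation $\dim Q=n$ consistent is exactly what makes parabolic degree zero transfer across $f$---together with the bookkeeping observation that the restriction ``$q_i$ isomorphisms'' is precisely the condition that cuts $T$ down to the fibre over locally free (rather than merely torsion-free) associated sheaves $F$.
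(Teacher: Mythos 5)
Your proposal is correct and takes essentially the same route as the paper, whose entire proof of this proposition is the single line ``Follows from Lemmas \ref{GPBcorresp} and \ref{GPB}.'' Your additional work---the Euler-characteristic computation showing $\deg F=\deg E$ (hence ${\rm par}\deg F_*=0$), the identification of $F_1(p^*F)$ with the diagonal, and the mutual-inverse argument via the uniqueness clause of Lemma \ref{GPBcorresp}(2)---is exactly the bookkeeping the paper leaves implicit, and it is carried out correctly.
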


\begin{proof}
	Follows from Lemmas \ref{GPBcorresp} and \ref{GPB}.
\end{proof}
%No Problem SUN

\section{GPBs with Parabolic Structures and Representations}

As in Proposition \ref{bijection}, let $T$ denote the set of isomorphism classes of GPBs $(E_*,Q)$ on $X$ with a 
fixed parabolic structure at $P$ such that ${\rm rk }\ E=n$, ${\rm par}\deg E_*=0$, and $\dim Q=n$ with the
maps $E_{x_j}\rightarrow Q$ isomorphisms. Let $T_0$ denote the subset of $T$ consisting of GPBs for which every 
direct summand of $E_*$ is of parabolic degree zero. Our aim is to give a correspondence between 
GPBs $(E_*,Q)$ in $T_0$ and a subset of representations $\zeta:\pi_1(X-P)*\mathbb{Z} \to GL(n, \mathbb{C})$.

Given a representation $\zeta:\pi_1(X-P)*\mathbb{Z}\rightarrow GL(n,\mathbb{C})$ such that its 
restriction $\zeta|_{\pi_1(X-P)}=(\zeta_{X},A_P)$ (i.e., it satisfies $\zeta(\tau_P)=\exp(A_P)$ for some 
diagonalizable matrix $A_P$), we get a parabolic bundle $(E_{\zeta})_*$ on $X$ of zero parabolic degree. 
From Theorem \ref{WeilAnalogue}, we get that every direct summand of $(E_{\zeta})_*$ has parabolic degree zero.

Again, we consider $X$ as a compact Riemann surface and follow the same notation as in \S \ref{rep}.
Let $1\in\mathbb{Z}$ be the generator and let $g$ denote the image of 1 under $\zeta$ in $GL(n,\mathbb{C})$, 
i.e., $g=\zeta(1)$. Fix points $z_1,z_2$ in $H$ lying over $x_1,x_2$, respectively, and 
identify $(E_{\zeta})_{x_1}$ and $(E_{\zeta})_{x_2}$ with $\mathbb{C}^n$ to realise $g$ as an 
isomorphism, say $\sigma$, of $(E_{\zeta})_{x_1}$ onto $(E_{\zeta})_{x_2}$. Let $F_1(E_{\zeta})$ 
denote the graph of $\sigma$ in $(E_{\zeta})_{x_1}\oplus (E_{\zeta})_{x_2}$ and define $Q_{\zeta}$ 
to be the quotient $Q_{\zeta}:=(E_{\zeta})_{x_1}\oplus (E_{\zeta})_{x_2}/F_1(E_{\zeta})$. 
Then $Q_{\zeta}$ is an $n$-dimensional quotient, and  we obtain a GPB $\big((E_{\zeta})_*,Q_{\zeta}\big)$ 
with parabolic structures at $P$ and parabolic degree zero, associated to $\zeta$. 
In fact, $\big((E_{\zeta})_*,Q_{\zeta}\big)\in T_0$. 

\begin{theorem}\label{GPBrep}
	A GPB $(E_*,Q)$ in $T$ is associated to a representation $\zeta$ of $\pi_1(X-P)*\mathbb{Z}$ if and only if 
	every direct summand of $E_*$ is of parabolic degree zero.
\end{theorem}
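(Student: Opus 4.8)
The plan is to prove both directions of the biconditional, leveraging the Weil-type criterion (Theorem \ref{WeilAnalogue}) on the normalisation $X$ together with the explicit GPB-from-representation construction sketched just before the statement.

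\textbf{The forward direction} ($\zeta$ exists $\Rightarrow$ every summand of $E_*$ has parabolic degree zero). Suppose $(E_*,Q)$ is associated to a representation $\zeta:\pi_1(X-P)*\mathbb{Z}\to GL(n,\mathbb{C})$. Restricting $\zeta$ to the free factor $\pi_1(X-P)$ gives a representation $\zeta_X$, and by the Mehta–Seshadri construction recalled in \S\ref{rep} this produces precisely the parabolic bundle $E_*=(E_\zeta)_*$ on $X$ of parabolic degree zero. Since $E_*$ arises from the representation $\zeta_X$ of $\pi_1(X-P)$, Theorem \ref{WeilAnalogue} (the parabolic Weil criterion) immediately forces every direct summand of $E_*$ to have parabolic degree zero. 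This direction is essentially a bookkeeping observation: the GPB data $Q$ only records the extra generator $g=\zeta(1)$ via the graph of an isomorphism $\sigma$, and plays no role in the degree-zero conclusion, which depends solely on $\zeta_X$.

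\textbf{The reverse direction} (every summand of $E_*$ has parabolic degree zero $\Rightarrow$ $\zeta$ exists) is the substantive one. Starting from $(E_*,Q)\in T_0$, I would first apply Theorem \ref{WeilAnalogue} in the other direction: since every direct summand of $E_*$ has parabolic degree zero, $E_*$ admits a holomorphic (logarithmic) connection, hence arises from a representation $\zeta_X:\pi_1(X-P)\to GL(n,\mathbb{C})$ with $\zeta_X(\tau_P)=\exp(A_P)$ for the diagonalisable matrix $A_P$ encoding the parabolic weights at $P$. It remains to extend $\zeta_X$ over the extra $\mathbb{Z}$-factor, i.e.\ to specify $\zeta(1)=g\in GL(n,\mathbb{C})$. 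The GPB quotient $Q$ determines an isomorphism $\sigma:E_{x_1}\to E_{x_2}$ because, by the defining hypotheses of $T$, both natural maps $q_i:E_{x_i}\to Q$ are isomorphisms; concretely $\sigma=q_2^{-1}\circ q_1$ (equivalently, $F_1(E)$ is the graph of $\sigma$, matching the construction before the theorem). After fixing lifts $z_1,z_2\in H$ over $x_1,x_2$ and trivialising the fibres $E_{x_1}\cong E_{x_2}\cong \mathbb{C}^n$ via the representation, I would set $g:=\sigma$ under these identifications. Since $\pi_1(X-P)*\mathbb{Z}$ is a free product, the pair $(\zeta_X,g)$ extends uniquely to a homomorphism $\zeta:\pi_1(X-P)*\mathbb{Z}\to GL(n,\mathbb{C})$ by the universal property of the free product, with no compatibility relation to check.

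\textbf{The main obstacle} is verifying that this reconstructed $\zeta$ genuinely recovers the \emph{given} GPB $(E_*,Q)$, not merely \emph{a} GPB with the right underlying parabolic bundle. The delicacy is that the identification of $(E_\zeta)_{x_1}$ and $(E_\zeta)_{x_2}$ with $\mathbb{C}^n$ depends on the chosen lifts $z_1,z_2$ and on the trivialisation coming from $\zeta_X$, so I must check that the graph $F_1(E_\zeta)$ of the resulting $\sigma$ coincides (as a subspace of $(E_\zeta)_{x_1}\oplus(E_\zeta)_{x_2}$) with the original $F_1(E)$, so that $Q_\zeta\cong Q$ compatibly with the projections $q_i$. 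I would handle this by tracing the identifications carefully and invoking the uniqueness clause of Lemma \ref{GPBcorresp}(2) together with Proposition \ref{bijection}: the underlying parabolic bundle $F_*$ on $Y$ has a \emph{unique} GPB model $(E_*,Q)$ with $E=p^*F$, so matching the underlying data of $(E_\zeta)_*$ with $E_*$ on $X$ and matching the gluing isomorphism $\sigma$ forces the two GPBs to agree. Care must also be taken that $A_P$ is chosen consistently, so that $\zeta(\tau_P)=\exp(A_P)$ reproduces the prescribed parabolic weights and flag at $P$; this is where the $(\zeta_X,A_P)$ normalisation in the discussion preceding the theorem is used.
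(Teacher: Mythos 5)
Your proposal is correct and follows essentially the same route as the paper: the forward direction is the paper's pre-theorem observation (restrict $\zeta$ to $\pi_1(X-P)$ and apply Theorem \ref{WeilAnalogue}), and the reverse direction mirrors the paper's proof exactly --- produce $\zeta_X$ from Theorem \ref{WeilAnalogue}, set $g=q_2^{-1}\circ q_1$ using the hypothesis that the $q_i$ are isomorphisms, and extend over the free product. Your final paragraph simply fills in the verification that the paper dismisses with ``it is easy to see that $(E_*,Q)\approx \big((E_{\zeta})_*,Q_{\zeta}\big)$'', which is a reasonable elaboration rather than a different argument.
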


\begin{proof}
	Given a GPB $(E_*,Q)$ on $X$ with parabolic structures at $P$ where $E_*$ is such that every direct summand is of 
	parabolic degree 0, using Theorem \ref{WeilAnalogue} we get a representation $\zeta_X$ of $\pi_1(X-P)$ 
	such that $E_*=(E_{\zeta_{X}})_*$. Since the natural maps $q_1: E_{x_1} \rightarrow Q$ and $q_2: E_{x_2} \rightarrow Q$ 
	are assumed to be isomorphisms, we have an isomorphism $q_2^{-1}\circ q_1:(E_{\zeta})_{x_1}\rightarrow(E_{\zeta})_{x_2}$ 
	and hence an element $g$ of $GL(n,\mathbb{C})$. Define a representation $\zeta:\pi_1(X-P)*\mathbb{Z}\rightarrow GL(n,\mathbb{C})$ 
	by $\zeta|_X=\zeta_{X}$ on $\pi_1(X-P)$ and $\zeta(1)=g$ where $1$ denotes the generator of $\mathbb{Z}$. It is easy to see 
	that $(E_*,Q)\approx \big((E_{\zeta})_*,Q_{\zeta}\big)$. 
\end{proof}

Using Corollary \ref{fungrpcor}, any representation $\zeta$ of 
$\pi_1(X-P)*\mathbb{Z}$ can also be regarded as a representation of $\pi_1(Y-P)$ in $GL(n, \mathbb{C})$.
Therefore, we can associate to it a parabolic bundle $(F_\zeta)_*=f\left(\big((E_{\zeta})_*,Q_{\zeta}\big)\right)$.
Henceforth, a parabolic bundle $(F_{\zeta})_*$ on $Y$ arising from a representation $\zeta$ of $\pi_1(Y-P)$ means that $\zeta|_{\pi_1(X-P)}=(\zeta_{X},A_P)$ (i.e., it satisfies $\zeta(\tau_P)=\exp(A_P)$ for some 
diagonalizable matrix $A_P$ as in \S \ref{rep}) and $(F_\zeta)_*$ is as constructed above.

\begin{lemma}\label{inequalities}
	Let $(E_*,Q)\in T$ be a GPB on $X$. Then for any non-trivial proper subbundle $E'$ of $E$. We have the following inequalities:
	\begin{enumerate}
		\item $\dim Q^{E'}\geq \text{rk }E'$
		\item $\dim Q^{E'}\leq \text{rk }E$		
	\end{enumerate}
\end{lemma}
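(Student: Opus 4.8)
The plan is to exploit the two defining properties of an element of $T$ stated just before the lemma: that $\dim Q = n = {\rm rk}\ E$, and that each natural map $q_i : E_{x_i}\to Q$ from Notation \ref{GPBBhosle} is an isomorphism. Once these are invoked, both inequalities reduce to elementary dimension counts, so I do not expect any genuine obstacle.

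For the upper bound (2), I would simply observe that $Q^{E'}$ is, by definition, the image of $E'_{x_1}\oplus E'_{x_2}$ in $Q$, hence a linear subspace of $Q$. Since $\dim Q = n = {\rm rk}\ E$ for every $(E_*,Q)\in T$, this immediately gives $\dim Q^{E'}\le \dim Q = {\rm rk}\ E$, which is exactly (2). No further argument is needed.

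For the lower bound (1), I would use that $E'$ is a subbundle. Writing $r'={\rm rk}\ E'$, the fibre $E'_{x_1}$ is an $r'$-dimensional subspace of $E_{x_1}$. Since $Q^{E'}$ is the image of $E'_{x_1}\oplus E'_{x_2}$ under $q$, it contains $q_1(E'_{x_1})$, the image of the first summand. As $q_1:E_{x_1}\to Q$ is an isomorphism, its restriction to the $r'$-dimensional subspace $E'_{x_1}$ is injective, so $\dim q_1(E'_{x_1})=r'$. Therefore $\dim Q^{E'}\ge \dim q_1(E'_{x_1})=r'={\rm rk}\ E'$, which is (1).

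The only point meriting care is the assertion that $E'_{x_i}$ is genuinely $r'$-dimensional and injects into $E_{x_i}$; this is automatic because $E'$ is a subbundle, so $E/E'$ is locally free and the inclusion $E'\hookrightarrow E$ is fibrewise injective at the smooth points $x_1,x_2$. Thus the entire content of the lemma is packaged into the isomorphism hypothesis on the maps $q_i$, and both inequalities follow at once from it.
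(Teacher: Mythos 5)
Your proposal is correct, and part (2) coincides with the paper's proof verbatim. For part (1), however, you take a slightly different and more direct route. The paper works on the kernel side of the exact sequence $0\to F_1(E)\to E_{x_1}\oplus E_{x_2}\xrightarrow{q} Q\to 0$: it sets $F_1(E'):=F_1(E)\cap\bigl(E'_{x_1}\oplus E'_{x_2}\bigr)$, uses the fact that the projections $p_i\colon F_1(E)\to E_{x_i}$ are isomorphisms (which is equivalent to the $q_i$ being isomorphisms) to conclude that $F_1(E')$ injects into $E'_{x_i}$, hence $\dim F_1(E')\leq {\rm rk}\,E'$, and then converts this into the desired lower bound via rank--nullity, $\dim F_1(E')=2\,{\rm rk}\,E'-\dim Q^{E'}$. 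You instead bound $\dim Q^{E'}$ from below directly: $Q^{E'}\supseteq q_1(E'_{x_1})$ and $q_1$ restricted to $E'_{x_1}$ is injective, so $\dim Q^{E'}\geq {\rm rk}\,E'$. Your version is shorter, avoids the kernel bookkeeping entirely, and uses the defining hypothesis of $T$ (the $q_i$ are isomorphisms) without passing through the equivalent statement about the $p_i$; the paper's formulation has the side benefit of introducing the objects $F_1(E')$ and the identity $\dim F_1(E')=2\,{\rm rk}\,E'-\dim Q^{E'}$, which reappear later (e.g.\ in the proof of Proposition \ref{anyrank1node}, where $\dim F_1(N)$ and $\dim Q^N$ are manipulated in exactly this way). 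You also correctly flag the one point needing care, namely that $E'_{x_i}\hookrightarrow E_{x_i}$ is injective because $E'$ is a subbundle.
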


\begin{proof}
	(1) A GPB $(E_*,Q)\in T$ gives rise to an exact sequence
	$$0\to F_1(E)\to E_{x_1}\oplus E_{x_2}\xrightarrow{q} Q\to 0$$
	where $\dim F_1(E)={\rm rk}\ E$ and the projections $p_i: F_1(E)\to E_{x_i}$ are isomorphisms. For a subbundle $E'\subset E$ if we consider $F_1(E'):=F_1(E)\cap (E'_{x_1}\oplus E'_{x_2})$ and $Q^{E'}:=q(E'_{x_1}\oplus E'_{x_2})$, we get another exact sequence
	$$0\to F_1(E')\to E'_{x_1}\oplus E'_{x_2}\xrightarrow{q} Q^{E'}\to 0.$$
	Since $p_i: F_1(E)\to E_{x_i}$ are isomorphisms, $p_i$ maps $F_1(E')$ injectively into $E'_{x_i}$. It follows that $\dim F_1(E')\leq{\rm rk}\ E'$. Substituting $\dim F_1(E')=2{\rm rk}\ E'-\dim Q^{E'}$, we get
	$$\dim Q^{E'}\geq \text{rk }E'.$$ 
	
	(2) Since $q_i:E_{x_i}\to Q$ is an isomorphism and $Q^{E'}\subset Q$, $\dim Q^{E'}\leq\dim Q={\rm rk}\ E$. 	
\end{proof}

%No Problem Bhosle Lemma 2.4

\begin{proposition}\label{stabilitycorresp}
	\hfill
	\begin{enumerate}
		\item Let $(E_*,Q)\in T$. If $E_*$ is a parabolic semistable (resp. parabolic stable) bundle, then $(E_*,Q)$ is a semistable (resp. stable) GPB.
		\item Let $F_*$ be a parabolic bundle on $Y$ of parabolic degree zero. If $p^*(F)_*$ is parabolic 
		semistable (resp. parabolic stable), then $F_*$ is parabolic semistable (resp. parabolic stable).
		\item If $\zeta_{X}=\zeta|_{\pi_1(X-P)}$ is a unitary (resp. irreducible unitary) representation 
		of $\pi_1(X-P)$, then the parabolic bundle $(F_{\zeta})_*$ on $Y$ associated to $\zeta$ is parabolic
		semistable (resp. parabolic stable).
	\end{enumerate}
\end{proposition}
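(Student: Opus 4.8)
The plan is to treat part (1) as the engine of the proposition and to obtain parts (2) and (3) as essentially formal consequences of it, combined with the GPB--parabolic dictionary recorded in Propositions \ref{GPB} and \ref{bijection} and with the Mehta--Seshadri Theorem \ref{MS}. First I would prove (1) by a direct slope computation; then (2) follows by transporting parabolic semistability of $p^*(F)_*=E_*$ up to the GPB and back down to $Y$; and (3) follows by feeding the unitary case of Mehta--Seshadri into (2).

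For part (1), recall that for $(E_*,Q)\in T$ one has $\mathrm{par}\deg E_*=0$ and $\dim Q=\mathrm{rk}\,E=n$, so the right-hand side of the GPS stability inequality is
\[
\frac{\mathrm{par}\deg E_*-\dim Q}{\mathrm{rk}\,E}=-1 .
\]
By Remark \ref{BhSun} the (semi)stability of $(E_*,Q)$ may be tested on subbundles $E'\subset E$ (those with $E/E'$ torsion free, which on the smooth curve $X$ are exactly the subbundles), and for such $E'$ Lemma \ref{inequalities}(1) gives $\dim Q^{E'}\geq \mathrm{rk}\,E'$. If $E_*$ is parabolic semistable then, its slope being $0$, every subbundle satisfies $\mathrm{par}\deg E'_*\leq 0$, whence
\[
\mathrm{par}\deg E'_*-\dim Q^{E'}\leq -\mathrm{rk}\,E',
\]
and dividing by $\mathrm{rk}\,E'$ yields the semistability inequality $\leq -1$. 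If $E_*$ is parabolic stable the inequality $\mathrm{par}\deg E'_*<0$ is strict, so the same computation gives $<-1$, i.e. $(E_*,Q)$ is a stable GPB.

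For part (2), let $F_*$ be a parabolic bundle on $Y$ with $\mathrm{par}\deg F_*=0$. By Lemma \ref{GPBcorresp}(2) there is a unique GPB $(E_*,Q)$ with $E=p^*F$ giving $F_*$, and I would first check $(E_*,Q)\in T$, the only nonobvious point being $\mathrm{par}\deg E_*=0$; this holds because the normalisation $p$ is birational (so $\deg p^*F=\deg F$) and preserves the flag and weights at the smooth point $P$. Thus if $p^*(F)_*=E_*$ is parabolic semistable (resp. stable), part (1) shows $(E_*,Q)$ is a semistable (resp. stable) GPB, and Proposition \ref{GPB} then gives that $F_*$ is parabolic semistable (resp. stable).

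For part (3), the construction of $(F_\zeta)_*$ yields $E_\zeta=p^*(F_\zeta)$, so $(E_\zeta)_*=p^*(F_\zeta)_*$. If $\zeta_X$ is unitary it decomposes as a sum of irreducible unitary representations, each of which corresponds by Theorem \ref{MS} to a stable parabolic bundle of parabolic degree $0$; hence $(E_\zeta)_*$ is a direct sum of stable bundles of slope $0$, i.e. polystable and in particular parabolic semistable (and if $\zeta_X$ is irreducible unitary it is parabolic stable). Applying part (2) to $F_*=(F_\zeta)_*$ then gives that $(F_\zeta)_*$ is parabolic semistable (resp. stable). The points that need the most care are the reduction in (1) to subbundles together with the correct reading of Lemma \ref{inequalities}, and the bookkeeping in (2) verifying $(E_*,Q)\in T$; once these are settled the argument is a short chain of the already-established correspondences, with Lemma \ref{inequalities}(1) doing the real work.
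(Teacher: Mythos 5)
Your proposal is correct and follows essentially the same route as the paper: part (1) by the identical slope computation (right-hand side equal to $-1$, combined with Lemma \ref{inequalities}), part (2) by passing through the unique GPB $(E_*,Q)$ with $E=p^*F$ and applying part (1) together with Proposition \ref{GPB}, and part (3) by feeding the Mehta--Seshadri Theorem \ref{MS} into part (2). The extra details you supply --- checking $(E_*,Q)\in T$ in (2) and the polystability argument for a non-irreducible unitary $\zeta_X$ in (3) --- only make explicit steps the paper leaves implicit.
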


\begin{proof}
	(1) $E_*$ is parabolic semistable (resp. parabolic stable) implies for all subbundles $E'\subset E$, ${\rm par} \deg E'_*\leq 0$ (resp. $<0$) and from Lemma \ref{inequalities} we have $\dim Q^{E'}\geq {\rm rk}\ E'$. Thus we get
	\begin{align*}
		{\rm par}\deg E'_*-\dim Q^{E'} & \leq -{\rm rk}\ E'\\
		{\rm i.e.,}\ \frac{{\rm par}\deg E'-\dim Q^{E'}}{{\rm rk}\ E'} &\leq -1\\
		& =  	\frac{{\rm par}\deg E_*-\dim Q}{{\rm rk}\ E}
	\end{align*}
	implying $(E_*,Q)$ is semistable. The proof similarly holds when $\leq$ is replaced by $<$.
	
	(2) Given $F_*$, we have a unique GPB $(E_*,Q)$ which yields $F$ in \eqref{ExactQPB} where $E=p^*F$. Since $p^*F_*$ is parabolic semistable from the first part of the proposition we have $(E_*,Q)$ is semistable and we get $F_*$ is parabolic semistable.
	
	(3) If $\zeta_{X}=\zeta|_{\pi_1(X-P)}$ is unitary (resp. irreducible unitary), using Theorem \ref{MS}  
	we get $(E_{\zeta})_*$ is parabolic semistable (resp. parabolic stable). It follows from (2) that $(F_\zeta)_*$ is parabolic semistable (resp. stable) since $(E_{\zeta})_*=p^*(F_{\zeta})_*$.
\end{proof}
% Fine Bhosle Proposition 2.5

 We summarize our observations in the following theorems.

 \begin{theorem}
 	Let $g(X)\geq 2$. A parabolic bundle $F_*$ with parabolic structures at $P$ of parabolic degree zero on an irreducible nodal curve $Y$ is associated to a representation $\zeta$ of $\pi_1(Y-P)$ if and only if every direct summand of its pullback to the normalisation $X$ has parabolic degree zero.
 	
 	If $\zeta_{X}$, the restriction of $\zeta$ to $\pi_1(X-P)$, is unitary (resp. irreducible unitary) then $F_*$ is parabolic semistable (resp. parabolic stable).
 \end{theorem}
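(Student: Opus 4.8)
The plan is to assemble this statement from the three structural results already established in this section: the bijective correspondence $f\colon T\to R$ of Proposition \ref{bijection}, the representation criterion for GPBs in Theorem \ref{GPBrep}, and the stability transfer in Proposition \ref{stabilitycorresp}, all tied together by the group-theoretic identification $\pi_1(Y-P)\cong\pi_1(X-P)*\mathbb{Z}$ from Corollary \ref{fungrpcor}. The whole proof should be a chaining of equivalences, with no new computation required.

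First I would fix a parabolic bundle $F_*$ of parabolic degree zero on $Y$. By Lemma \ref{GPBcorresp}(2) (equivalently, by the explicit description $E=p^{*}F$ in Proposition \ref{bijection}), there is a unique GPB $(E_*,Q)\in T$ with $E=p^{*}F$ recovering $F_*$ through the exact sequence \eqref{ExactQPB}; in particular the direct summands of the pullback $p^{*}F_*$ are precisely the direct summands of $E_*$. The next step is to translate the phrase ``associated to a representation'' across this bijection. Via Corollary \ref{fungrpcor}, a representation $\zeta$ of $\pi_1(Y-P)$ is the same datum as a representation of $\pi_1(X-P)*\mathbb{Z}$, and by the graph-of-$\sigma$ construction preceding Theorem \ref{GPBrep} the parabolic bundle on $Y$ attached to $\zeta$ is $(F_{\zeta})_*=f\big((E_{\zeta})_*,Q_{\zeta}\big)$. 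Hence $F_*$ arises from a representation of $\pi_1(Y-P)$ exactly when the associated $(E_*,Q)$ arises from a representation of $\pi_1(X-P)*\mathbb{Z}$. Applying Theorem \ref{GPBrep} to $(E_*,Q)$, this holds if and only if every direct summand of $E_*=p^{*}F_*$ has parabolic degree zero, which is the stated criterion.

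For the final assertion, I would simply invoke the stability transfer: if $\zeta_{X}=\zeta|_{\pi_1(X-P)}$ is unitary (resp. irreducible unitary), this is exactly the hypothesis of Proposition \ref{stabilitycorresp}(3), which yields that $(F_{\zeta})_*=F_*$ is parabolic semistable (resp. parabolic stable), completing the argument.

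The only point requiring genuine care, rather than routine bookkeeping, is confirming that the two passages, from $F_*$ to its GPB and from $\zeta$ to $(F_{\zeta})_*$, are mutually inverse, so that the $F_*$ one begins with is literally the $(F_{\zeta})_*$ produced by the representation; this is what permits the three results to be fused into a single equivalence rather than a one-directional implication. This reconciliation is underwritten by the uniqueness clause in Lemma \ref{GPBcorresp}(2) together with the bijectivity in Proposition \ref{bijection}, and I would note that the hypothesis $g(X)\geq 2$ enters only to license the use of the Mehta--Seshadri correspondence (Theorem \ref{MS}) inside Proposition \ref{stabilitycorresp}(3).
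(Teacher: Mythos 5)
Your proposal is correct and follows exactly the paper's route: the paper offers no separate argument for this theorem, presenting it as a summary of Theorem~\ref{GPBrep}, the construction of $(F_\zeta)_*=f\bigl((E_\zeta)_*,Q_\zeta\bigr)$ via Corollary~\ref{fungrpcor}, and Proposition~\ref{stabilitycorresp}(3), which is precisely the chain of results you assemble. Your added remark on why the two passages ($F_*\mapsto(E_*,Q)$ and $\zeta\mapsto(F_\zeta)_*$) are mutually inverse, via Lemma~\ref{GPBcorresp}(2) and Proposition~\ref{bijection}, makes explicit a point the paper leaves implicit.
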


\begin{proposition}\label{anyrank1node}
	Let $g(X)\geq 2$. For every integer $n\geq 4$,
% 	$\alpha_1<\dots \alpha_r\leq 1$ and $l_1,\dots l_r,$ such that 	$\alpha_i\in \mathbb{Q}, l_i\in \mathbb{Z},$ 
	there exists a stable parabolic bundle  of rank $n$ and 
	parabolic degree zero on the nodal curve which is not associated to a representation of $\pi_1(Y-P)$.
\end{proposition}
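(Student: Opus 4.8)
The plan is to exploit the correspondence of Proposition \ref{bijection}. A parabolic bundle $F_*$ of rank $n$ and parabolic degree $0$ on $Y$ is the same datum as a GPB $(E_*,Q)\in T$ with $E=p^*F$, and under $f$ stable GPBs go to parabolic stable bundles. By Theorem \ref{Result1}, such an $F_*$ fails to come from \emph{any} representation of $\pi_1(Y-P)$ the moment $E_*=p^*F_*$ has a direct summand of nonzero parabolic degree. So the whole problem reduces to producing a \emph{stable} GPB $(E_*,Q)\in T$ of rank $n$ whose underlying parabolic bundle $E_*$ splits off a summand of nonzero parabolic degree.

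To build it, I would fix $r',r''\ge2$ with $r'+r''=n$ (possible since $n\ge4$) and a small $d\in(0,1)$. Using $g(X)\ge2$ I would choose \emph{generic} stable bundles $E'$ and $E''$ on $X$ of ranks $r',r''$ and degrees $0$ and $-1$, and, by Remark \ref{PB}, parabolic structures at $P$ with $\mathrm{par}\deg E'_*=d$ and $\mathrm{par}\deg E''_*=-d$. Put $E_*=E'_*\oplus E''_*$; it has rank $n$, parabolic degree $0$, and the summand $E'_*$ of parabolic degree $d\ne0$. For the generalised parabolic structure take $F_1(E)=\Gamma_\sigma$, the graph of an isomorphism $\sigma\colon E_{x_1}\to E_{x_2}$, and $Q=Q_\sigma:=(E_{x_1}\oplus E_{x_2})/\Gamma_\sigma$; then the maps $E_{x_i}\to Q$ are isomorphisms, so $(E_*,Q_\sigma)\in T$. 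For a subbundle $W\subset E$ the computation in the proof of Lemma \ref{inequalities} gives $\dim Q^{W}=2\,\mathrm{rk}\,W-\dim(W_{x_1}\cap\sigma^{-1}W_{x_2})$, and since $\mathrm{par}\deg E_*-\dim Q=-\,\mathrm{rk}\,E$ the GPB $(E_*,Q_\sigma)$ is stable precisely when
\[
\dim\big(W_{x_1}\cap\sigma^{-1}W_{x_2}\big)\;<\;\mathrm{rk}\,W-\mathrm{par}\deg W_*
\]
for every proper nonzero subbundle $W$. When $\mathrm{par}\deg W_*<0$ the right-hand side is $>\mathrm{rk}\,W\ge\dim(W_{x_1}\cap\sigma^{-1}W_{x_2})$, so the inequality is free; only subbundles of nonnegative parabolic degree can obstruct stability.

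The heart of the argument is therefore to show that, for the generic choice above, $E'$ is the \emph{only} proper subbundle of $E$ with $\mathrm{par}\deg\ge0$. Given a subbundle $W$, write $K=W\cap E'\subseteq E'$ and let $I\subseteq E''$ be the saturation of the image of $W$ in $E''$, so that $\mathrm{par}\deg W_*\le\mathrm{par}\deg K_*+\mathrm{par}\deg I_*$ and, when $E'\subseteq W$, in fact $W=E'\oplus I$. Because $E'$ is stable of degree $0$ and $E''$ is stable of degree $-1$, a \emph{proper} nonzero $K$ has $\deg K\le-1$ and a proper nonzero $I$ has $\deg I\le-1$; with $d$ small and the weights generic, the accompanying weight contributions are too small to compensate, so $\mathrm{par}\deg K_*<0$ and $\mathrm{par}\deg I_*<-d$. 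Running these bounds through the possibilities for $(K,I)$ leaves $W=E'$ as the unique subbundle with $\mathrm{par}\deg W_*=d>0$; every other proper $W$ has $\mathrm{par}\deg W_*<0$. This degree-and-weight bookkeeping is where I expect the real difficulty to lie: the estimates on proper subbundles of $E''$ must be uniform in their rank, which forces one to choose $E''$ with a sufficiently large (parabolic) Segre invariant and the weights generic — exactly the point at which $g(X)\ge2$ and the rank hypothesis are used, and where the codimension estimates in the spirit of \cite{XS} and \cite{AUS} enter.

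With this in hand the choice of $\sigma$ is immediate. For $W=E'$ the displayed inequality reads $\dim(E'_{x_1}\cap\sigma^{-1}E'_{x_2})<r'-d$, equivalently (as $0<d<1$) $\sigma(E'_{x_1})\ne E'_{x_2}$; the bad locus $\{\sigma:\sigma(E'_{x_1})=E'_{x_2}\}$ has positive codimension $r'r''$ in $\mathrm{GL}(n)$, so a generic $\sigma$ works. For such $\sigma$ the GPB $(E_*,Q_\sigma)$ is stable, hence $F_*:=f(E_*,Q_\sigma)$ is a parabolic stable bundle of rank $n$ and parabolic degree $0$ on $Y$ with $p^*F_*=E'_*\oplus E''_*$. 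Since $E'_*$ is a direct summand of nonzero parabolic degree, Theorem \ref{Result1} shows $F_*$ comes from no representation of $\pi_1(Y-P)$, which is the desired example.
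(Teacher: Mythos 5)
Your proposal is correct in substance and follows the same skeleton as the paper's own proof: reduce, via Proposition \ref{bijection} and Theorem \ref{Result1}, to producing a stable GPB whose underlying parabolic bundle splits as a direct sum of two parabolic bundles of opposite, nonzero parabolic degree; take the generalised parabolic structure to be the graph of a gluing isomorphism; and verify GPB stability by running a subbundle $W$ through $K=W\cap E'$ and its image in $E''$. The differences lie in the implementation, and they are worth recording. The paper prescribes the parabolic data in advance (Remark \ref{PB}), so its summands can only be arranged to satisfy $0<\mathrm{par}\deg E_*\leq 1$ and $-1\leq \mathrm{par}\deg F_*<0$; to absorb the extreme value $1$ it needs each summand to have rank at least $2$ (whence $n\geq 4$), it needs $(1,0)$- resp.\ $(0,1)$-stability of the summands rather than ordinary stability, and it chooses $F_1(G)$ explicitly so that $F_1(G)\cap(E_{x_1}\oplus E_{x_2})=0$. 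You instead choose the weights freely (degrees $0$ and $-1$, parabolic degrees $\pm d$ with $0<d<1$), which lets you get away with ordinary stability of $E'$ and $E''$ and with the much weaker transversality $\sigma(E'_{x_1})\neq E'_{x_2}$. This is precisely why the analogue of the paper's case (iii)(a) --- the only place where the bound $\mathrm{par}\deg\leq 1$ competes against the rank of a summand --- never bites in your setup; indeed your construction works for every $n\geq 2$, not just $n\geq 4$, which clarifies that the paper's rank restriction (and its remark that ranks $2$ and $3$ require two nodes) is an artifact of allowing prescribed, possibly trivial, weights, something your non-integral $d$ sidesteps.

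One corrective remark: the step you flag as "the real difficulty" is easier than you fear, and the tools you reach for there (large parabolic Segre invariants, genericity of $E''$, codimension estimates in the spirit of \cite{XS} and \cite{AUS}) are not needed; the required bounds are automatic and uniform in the rank of the subbundle. For a proper nonzero $K\subseteq E'$, stability of $E'$ (degree $0$) forces $\deg K\leq -1$, and the induced weights total at most $d$, so $\mathrm{par}\deg K_*\leq -1+d<0$. For a proper nonzero saturated $I\subset E''$, stability of $E''$ (slope $-1/r''$) forces $\deg I\leq -1$; if you give $E''_*$ a single positive weight $(1-d)/r''$ with multiplicity $r''$ (or any weights that are all strictly positive), then a subbundle of rank $s<r''$ inherits total weight at most $(1-d)-\tfrac{1-d}{r''}<1-d$, whence $\mathrm{par}\deg I_*<-d$ strictly. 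Combined with the modular-law identity $W=E'\oplus(W\cap E'')$ when $E'\subseteq W$, these two uniform bounds dispose of all cases of $(K,I)$ and show that $E'$ is the unique proper nonzero subbundle of nonnegative parabolic degree, exactly as you claimed; the only genericity genuinely used in your whole argument is the single open condition $\sigma(E'_{x_1})\neq E'_{x_2}$.
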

\begin{proof}
         If a parabolic vector bundle $F_*$ on $Y$ is associated to a representation of $\pi_1(Y-P)$ then 
          $(p^*F)_* = E_*$ is a direct sum of indecomposable parabolic bundles of degree zero. Therefore, in view of 
          Proposition $2.23$ and Remark $2.21$, it suffices to show that there exists a stable GPB $(E_*, F_1(E))$ on $X$ such that 
          $E_*\in T-T_0$ with $r(E) \geq 4$.
          
	Given $n\geq 4$, we can write $n=2m$ or $n=2m+1$ for some $m>1$ depending on if $n$ is even or odd. We discuss the case of $n=2m$ in detail, and for odd ranks, the necessary modifications are described. Assume 
	that $n=2m$.
	
	We can always find  parabolic bundles $E_*$ and $F_*$  of rank $m$ such that $E_*$ is stable, 
	$F_*$ is $(1,0)$-stable, $0< {\rm par}\deg E_*\leq 1$, and $-1\leq {\rm par}\deg F_*< 0$ so that
	$G_*=E_*\oplus F_*$ is parabolic bundle with the given parabolic structure at $P\in X$.
	Choose $F_1(G)$ so that $\dim F_1(E)=0$. For example, if $( e_1 ,\dots, e_m)$ is a basis of $E_{x_1}$,
	$(e_{m+1},\dots , e_{2m})$ a basis of $E_{x_2}$, $( g_1 ,\dots, g_m)$ is a basis of $F_{x_1}$ 
and $(g_{m+1},\dots , g_{2m})$ a basis of $F_{x_2}$ then one can take $F_1(F)$ spanned by 
$(e_1+g_{m+1},e_2+g_{m+2},\dots e_m+g_{2m}, e_{m+1}+g_1,e_{m+2}+g_2 \dots e_{2m}+g_m)$.

{\bf Claim:} $(E_*,F_1(E))$ is a stable GPB. 

We show that for any subbundle $N\subset G$ such that $G/N$ is a torsion free sheaf,
	$$\frac{{\rm par}\deg N_*+\dim F_1(N)}{{\rm rk}\ N}<\frac{{\rm par}\deg G_*+\dim F_1(G)}{{\rm rk}\ G},$$
	i.e., $ {\rm par}\deg N_*<{\rm rk}\ N - \dim F_1(N)= \dim Q^N- {\rm rk}\ N.$
	Let $\phi: N\to F$ be the composition $N\hookrightarrow G\to \frac{G}{E}\approx F$. 
	Let  $K\subseteq N$ be the kernel and $I$ the image of the morphism $\phi$. 
		It is clear that  ${\rm rk}(N)={\rm rk}(K)+{\rm rk}(I)$ and $K \subseteq E$. 
	Also, the stability of  $E_*$ implies that ${\rm par}\deg K_*< 1$ 
	whenever $0<{\rm rk}\ K<{\rm rk}\ E$, and ${\rm par}\deg K_*\leq  1$ if ${\rm rk}\ K={\rm rk}E$. 
	Now, we consider all possibilities for $I$ and prove the claim in each case. 
	
	Case (i): ${\rm rk}\ I=m$. This implies $I=F,\ {\rm par}\deg I_*={\rm par}\deg F_*<0,$ and ${\rm rk}\ K<m={\rm rk}\ E$ as $N$ is a 
	proper subbundle of $G$. Since ${\rm par}\deg N_*\leq{\rm par}\deg K_*+{\rm par}\deg I_*<0$, applying Lemma \ref{inequalities}, we have ${\rm par}\deg N_*<\dim Q^N-{\rm rk}(N)$.
	
	Case (ii): $0<{\rm rk}\ I<m$. Since $F_*$ is $(1,0)$-stable, we get 
	$$\frac{{\rm par}\deg I_*+1}{{\rm rk}\ I}<\frac{{\rm par}\deg F_*+(1-0)}{{\rm rk}\ F},$$
	which implies that ${\rm par}\deg I_*<-1$. Thus, ${\rm par}\deg K_*\leq  1$ and Lemma \ref{inequalities} imply that
	$${\rm par}\deg N_*<0\leq\dim Q^N-{\rm rk}(N).$$
	
	Case (iii): ${\rm rk}\ I=0$. This implies $K=N\subseteq E$ and $\dim F_1(N)=0$ ($\dim Q^N=2{\rm rk}\ N$)
	
	(a) If ${\rm rk}\ N={\rm rk}\ E$, then ${\rm par}\deg N_*\leq 1<m={\rm rk}\ N=\dim Q^N-{\rm rk}(N)$.
	
	(b) If $0<{\rm rk}\ N<{\rm rk}\ E$, since $E_*$ is stable and ${\rm rk}(N)\geq 1$  we get 
	$${\rm par}\deg N_*<1 \leq {\rm rk}(N)=\dim Q^N-{\rm rk}(N).$$ 
	
	For $n=2m+1$, we take $E_*$ to be $(0,1)$ stable, ${\rm rk}\ E=m+1$ and $F_1(G)$ is chosen so that $\dim F_1(E)=1$ (this is the
best possible in this case) in the proof above. 
	
	Note that the $(0,1)$ stability of $E_*$ implies that $E_*$ is stable and cases (i), (ii) and (iii)(a)
	follow similarly as above. We discuss the case (iii)(b). 
	
	Case (iii): ${\rm rk}\ I=0$. This implies $K= N\subseteq E$ and 
	$\dim F_1(N)\leq1 $ ($2{\rm rk}\ N-1\leq\dim Q^N\leq 2{\rm rk}\ N$).
	
         (b) In this case, there arises a possibility where ${\rm rk}\ N=1$ and $\dim Q^N=1$ which implies
	 $\dim Q^N-{\rm rk}(N)=0$; the above proof of Case (iii)(b) fails.
		 The $(0,1)$ stability of $E_*$ resolves this issue. Note that,
	 $E_*$ is $(0,1)$-stable implies that ${\rm par}\deg N_*<0$. Hence we get
	 $${\rm par}\deg N_*<0  \leq {\rm rk}(N)-1\leq\dim Q^N-{\rm rk}(N).$$ 
\end{proof}

\begin{remark}
	Using our method in the odd rank case, one can give an easy  proof of \cite[Proposition 3.3]{UB3}. 
\end{remark}

\begin{remark}
 Proposition \ref{stabilitycorresp}$(3)$ is also true if the curve has finitely many nodes. Also, one considers 
 the representations of $\pi_1\left(Y-\{p_1,\dots, p_n\}\right)$ to construct parabolic bundles on 
 $Y$ with parabolic structures at $p_1,\dots ,p_n$.
In short, all results in this section are also true in the case of finitely many nodes and parabolic points.
\end{remark} 

\begin{remark}
  Proposition \ref{anyrank1node} also holds for ranks two and three, if the curve has more than one node.
In this case, the proof follows similarly as above except for case (iii)(a) and is resolved for 
curves with at least two nodes.

\end{remark}

\section{Codimension Computations}

Assume that $X$ and $Y$ are as defined in Section \ref{SecGPB}. Let $\mathcal{U}_Y(n,d,I, \alpha,k)$ denote the $S$-equivalence classes of semistable parabolic sheaves of rank $n$ and degree $d$  with parabolic structures over $I$ of 
weights $\alpha(y)=(\alpha_1(y),\dots, \alpha_{r_y}(y))$ 
and multiplicities $k(y)=( k_1(y),\dots, k_{r_y}(y)$ for each $y\in I$. Let $\mathcal{U}^s_Y(n,d,I, \alpha,k)$
be the open dense subset of $\mathcal{U}_Y(n,d,I, \alpha,k)$ consisting of stable parabolic sheaves.
We write $\mathcal{U}_Y:=\mathcal{U}_Y(n,d,I, \alpha,k)$ and  $\mathcal{U}_Y^s:=\mathcal{U}_Y^s(n,d,I, \alpha,k)$

Similarly, we denote by $\mathcal{U}_X$,  the $S$-equivalence classes of 
semistable parabolic bundles over $X$ and by $\mathcal{U}^s_X$ the open subset
of $\mathcal{U}_X$ consisting of stable parabolic bundles.
We recall the construction  of $\mathcal{U}_X$ from \cite{XS}. 

Let $\widetilde{\mathcal{Q}}$ be the Quot scheme of coherent quotients of $\mathcal{O}_X^N$ where $N=d+n(1-g(X))$. 
There exists a universal quotient sheaf $\mathcal{F}$ on $X\times \widetilde{\mathcal{Q}}$, flat 
over $\widetilde{\mathcal{Q}}$ such that $\mathcal{O}_{X\times\widetilde{\mathcal{Q}}}\to \mathcal{F}\to 0$. 
Let $\mathcal{F}_x$ be the sheaf obtained by restricting $\mathcal{F}$ to $\{x\}\times \widetilde{\mathcal{Q}}$ 
and let $Flag_{\vec{k}(x)}\mathcal{F}_x$ denote the relative flag scheme of type $\vec{k}(x)$. Let $\widetilde{\mathcal{R}}$
be the fibre product over $\widetilde{\mathcal{Q}}$ defined by
$$\widetilde{\mathcal{R}}=\times_{\underset{x\in I}{\widetilde{\mathcal{Q}}}}Flag_{\vec{k}(x)}(\mathcal{F}_x)$$
so that we get a flag bundle $\widetilde{\mathcal{R}}\to\widetilde{\mathcal{Q}}$.

Let $\widetilde{\mathcal{R}}^s$ (resp. $\widetilde{\mathcal{R}}^{ss}$) be the open subscheme corresponding to 
stable (resp. semistable) parabolic bundles, which is generated by global sections and whose first cohomology 
vanishes when $d$ is large enough. The variety $\mathcal{U}_X$ is the good quotient of $\widetilde{\mathcal{R}}^{ss}$ 
by $SL(N)$ acting through $PGL(N)$.

\noindent We have the following estimates on codimension.

\begin{proposition}\label{codimparabolic}
	With notations as above, 
	$${\rm codim}(\widetilde{\mathcal{R}}-\widetilde{\mathcal{R}}^{s},\widetilde{\mathcal{R}})\geq (n-1)(g(X)-1)+1\,.$$
\end{proposition}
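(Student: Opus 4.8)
The plan is to bound the codimension of the locus of non-stable points inside $\widetilde{\mathcal{R}}$ by stratifying the complement $\widetilde{\mathcal{R}}-\widetilde{\mathcal{R}}^s$ according to the Harder--Narasimhan or, more efficiently, the maximal destabilizing parabolic subbundle. Concretely, a point of $\widetilde{\mathcal{R}}$ corresponds to a parabolic bundle $E_*$ of rank $n$ and fixed parabolic degree (normalized to $0$), and failure of stability means there is a parabolic subbundle $E'_*\subset E_*$ of rank $n'$ with $1\le n'\le n-1$ and ${\rm par}\deg E'_*\ge 0$. The idea is to estimate, for each admissible pair $(n', \text{weight type})$, the dimension of the subscheme of $\widetilde{\mathcal{R}}$ carrying such a destabilizing subbundle, and to show each such stratum has codimension at least $(n-1)(g(X)-1)+1$; taking the minimum over the finitely many strata then gives the result.

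First I would set up the standard dimension count for $\widetilde{\mathcal{R}}$ itself. Since $\widetilde{\mathcal{R}}$ is a flag bundle over the Quot scheme $\widetilde{\mathcal{Q}}$, its dimension at a point corresponding to $E_*$ is computed from $\dim\widetilde{\mathcal{Q}}= N^2 + n^2(g(X)-1)$ (with $N=d+n(1-g(X))$) together with the fibre dimension of the relative flag scheme $Flag_{\vec k(x)}(\mathcal F_x)$ at each parabolic point. Next I would fix a rank $n'$ and estimate the dimension of the locus where $E_*$ admits a subbundle $E'_*$ of rank $n'$ that destabilizes: one parametrizes the subbundle $E'$ (an element of a relative Quot/Grassmannian bundle), the quotient $E''$, and the extension class in ${\rm Ext}^1(E'', E')$, while tracking the constraints the flags at each $x\in I$ impose on the subbundle structure. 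The key numerical input is that the generic fibre of ${\rm Hom}$ between the two pieces, together with the Riemann--Roch computation of $\dim{\rm Ext}^1$, produces the factor $(g(X)-1)$ weighted by $n'(n-n')\ge n-1$, which is exactly where the bound $(n-1)(g(X)-1)$ originates. I would then verify that the parabolic weight contribution and the flag-scheme bookkeeping contribute the remaining $+1$, using that ${\rm par}\deg E'_*\ge 0 > {\rm par}\deg E_*/n$ forces a strict inequality that costs at least one further dimension.

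The genuinely delicate step, and the main obstacle, is the careful codimension estimate for the destabilizing stratum: one must show the codimension of the locus of bundles admitting a fixed-type parabolic subbundle is at least $n'(n-n')(g(X)-1)+1$ uniformly, and then check that $n'(n-n')\ge n-1$ for all $1\le n'\le n-1$, so that the minimum over strata still dominates $(n-1)(g(X)-1)+1$. This requires correctly accounting for the parabolic flag data at each $x\in I$ on both sub and quotient, so that the parabolic Euler characteristic (rather than the ordinary one) enters the ${\rm Ext}^1$ dimension. I expect that rather than redoing this count from scratch I can invoke the analogous non-parabolic computation in \cite[Section 7.1]{AUS} and the parabolic torsion-free sheaf estimate in \cite[Proposition 5.1]{XS}, adapting their stratification to the flag bundle $\widetilde{\mathcal{R}}\to\widetilde{\mathcal Q}$; the adaptation amounts to observing that the flag-scheme fibres are the same over the destabilizing stratum and the stable locus, so the codimension is governed entirely by the underlying (parabolic) bundle computation, which those references already control.
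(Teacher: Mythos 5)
Your closing paragraph --- invoking \cite[Proposition 5.1]{XS} together with \cite[Section 7.1]{AUS} rather than redoing the count --- is in fact the paper's \emph{entire} proof: the paper simply observes that ${\rm codim}(\widetilde{\mathcal{R}}-\widetilde{\mathcal{R}}^{s},\widetilde{\mathcal{R}})= {\rm codim}(\widetilde{\mathcal{R}}^{ss}-\widetilde{\mathcal{R}}^{s},\widetilde{\mathcal{R}}^{ss})$ and then quotes \cite[Proposition 5.1(1)]{XS}, which yields exactly the bound $(n-1)(g(X)-1)+1$. So to the extent that you fall back on the citation, you and the paper agree, and your preliminary dimension bookkeeping (e.g.\ $\dim\widetilde{\mathcal{Q}}=N^2+n^2(g(X)-1)$, and $n'(n-n')\ge n-1$) is sound.

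However, the stratification argument that forms the body of your proposal has two genuine gaps if it is meant to stand on its own. First, you identify points of $\widetilde{\mathcal{R}}-\widetilde{\mathcal{R}}^{s}$ with non-stable parabolic \emph{bundles}, but $\widetilde{\mathcal{R}}$ is a flag bundle over the full Quot scheme $\widetilde{\mathcal{Q}}$ of coherent quotients of $\mathcal{O}_X^N$: the complement of $\widetilde{\mathcal{R}}^{s}$ also contains points whose underlying quotient sheaf has torsion (hence is not a parabolic bundle at all), as well as quotients failing the global-generation and $H^1$-vanishing conditions built into the definition of $\widetilde{\mathcal{R}}^{s}$. None of these loci are seen by a Harder--Narasimhan stratification; disposing of them is precisely what the paper's reduction to ${\rm codim}(\widetilde{\mathcal{R}}^{ss}-\widetilde{\mathcal{R}}^{s},\widetilde{\mathcal{R}}^{ss})$ accomplishes, and your sketch never addresses it. Second, your proposed mechanism for the crucial ``$+1$'' is vacuous on the most important stratum: you argue that ${\rm par}\deg E'_*\ge 0 > {\rm par}\deg E_*/n$ forces a strict inequality, but since ${\rm par}\deg E_*=0$ this reads $0>0$. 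On the strictly semistable stratum the destabilizing subbundle has parabolic degree exactly zero, no strictness is available, and the extra $+1$ must instead come from the finer orbit/parameter count carried out in \cite[Proposition 5.1(1)]{XS}. So either carry out the full Quot-scheme analysis (torsion loci, cohomological conditions, and an honest derivation of the $+1$), or make the citation itself the proof, which is what the paper does.
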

\begin{proof}
 A proof of this follows from the fact that
 $${\rm codim}(\widetilde{\mathcal{R}}-\widetilde{\mathcal{R}}^{s},\widetilde{\mathcal{R}})= {\rm codim}(\widetilde{\mathcal{R}^{ss}}-\widetilde{\mathcal{R}}^{s},\widetilde{\mathcal{R}^{ss}}),$$
and  \cite[Proposition 5.1(1)]{XS}.
 \end{proof}

Next, we consider generalised parabolic sheaves $(E_*,Q)$ on $X$ with parabolic structures over $I$. 
One defines an $S$-equivalence of GPS on $X$. By \cite[Theorem 1.3]{XS}, there exists a (coarse) moduli space 
$\mathcal{P}^s$ of stable GPS on $X$, which is a smooth variety. We have an open immersion 
$\mathcal{P}^s\hookrightarrow  \mathcal{P}$, where $\mathcal{P}$ is the space of $S$-equivalence classes of 
semistable GPS on $X$, which is a reduced, normal, projective variety 
with rational singularities. Note that, every semistable GPS $(E',Q')$ with ${\rm rk}(E')>0$ is $S$-equivalent to a semistable GPB $(E,Q)$ (\cite[Lemma 2.5]{XS}). 
There exists a finite surjective morphism $\phi : \mathcal{P}\to \mathcal{U}_Y$ and is the normalisation of $\mathcal{U}_Y$. For more details, see \cite[Proposition 2.1]{XS}.

Define $\widetilde{\mathcal{R}}'$ by
$$\widetilde{\mathcal{R}}':=Gr(n,\mathcal{F}_{x_1}\oplus\mathcal{F}_{x_2})\times_{\widetilde{\mathcal{Q}}}\widetilde{\mathcal{R}}$$
where $\mathcal{F}$ is the universal quotient 
$\mathcal{O}_{X\times\widetilde{\mathcal{Q}}}\to \mathcal{F}\to 0$ on $X\times \widetilde{\mathcal{Q}}$.

Let $\widetilde{\mathcal{R}}'^{ss}$ (resp. $\widetilde{\mathcal{R}}'^{s}$) denote the open subset of 
$\widetilde{\mathcal{R}'}$ consisting of semistable GPS (resp. stable GPS). The space $\mathcal{P}$ 
(resp. $\mathcal{P}^s$) is the  GIT quotient of 
$\widetilde{\mathcal{R}}'^{ss}$ (resp. $\widetilde{\mathcal{R}}'^{s}$) by $SL(N)$ acting through $PGL(N)$.

Let $\rho$ denote the natural projection $\rho:\widetilde{\mathcal{R}}'\to\widetilde{\mathcal{R}}$. 
Then $\widetilde{\mathcal{R}}'\to\widetilde{\mathcal{R}}$ is a grassmannian bundle over $\widetilde{\mathcal{R}}$.
Let  $\widetilde{\mathcal{H}}'$ be the open subscheme of $\widetilde{\mathcal{R}}'$ corresponding to
GPSs $(E_*,Q)$ such that the maps $p_1\colon E_{x_1}\to Q$ and $p_2\colon E_{x_2}\to Q$ are isomorphisms. 
Let  $\widetilde{\mathcal{H}}'^{ss}\subset  \widetilde{\mathcal{H}}'$ 
(resp. $\widetilde{\mathcal{H}}'^{s}\subset  \widetilde{\mathcal{H}}'$) denote the subset consisting of semistable (resp. stable) points.
Let $\overline{\mathcal{H}}'^s\subset \widetilde{\mathcal{H}}'$ 
be the subset corresponding to 
GPSs $(E_*,Q)$ where the underlying parabolic bundle $E_*$ is parabolic stable. 
% Since it is $SL(N)$ invariant and hence 
Let  $\mathcal{P}'$ and $\overline{\mathcal{P}}^s$ denote the open subsets of $\mathcal{P}$ which 
are quotients of $\widetilde{\mathcal{H}}'^{ss}$ and $\overline{\mathcal{H}}'^s$, respectively.

\begin{proposition}\label{G}
	With notations as above, we have 
	$${\rm codim}(\widetilde{\mathcal{H}}'-\overline{\mathcal{H}}'^{s},\widetilde{\mathcal{H}}')\geq(n-1)(g(X)-1)+1.$$

\end{proposition}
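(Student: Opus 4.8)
The plan is to realise the pair $(\widetilde{\mathcal{H}}', \overline{\mathcal{H}}'^s)$ as the pullback under the projection $\rho$ of the pair $(\widetilde{\mathcal{R}}, \widetilde{\mathcal{R}}^s)$, and then to transfer the estimate of Proposition \ref{codimparabolic} through the fibration $\rho$.

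First I would rewrite the relevant loci in terms of $\rho$. By construction a point of $\widetilde{\mathcal{H}}'$ lies in $\overline{\mathcal{H}}'^s$ exactly when its underlying parabolic bundle $E_*$ is parabolic stable, that is, exactly when its image under $\rho$ lies in $\widetilde{\mathcal{R}}^s$. Writing $\rho':=\rho|_{\widetilde{\mathcal{H}}'}\colon\widetilde{\mathcal{H}}'\to\widetilde{\mathcal{R}}$, this gives
$$\overline{\mathcal{H}}'^s=(\rho')^{-1}(\widetilde{\mathcal{R}}^s),\qquad \widetilde{\mathcal{H}}'-\overline{\mathcal{H}}'^s=(\rho')^{-1}\big(\widetilde{\mathcal{R}}-\widetilde{\mathcal{R}}^s\big).$$

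Next I would analyse $\rho'$ fibrewise. Over a point representing a parabolic bundle $E_*$, the fibre of $\rho$ is the Grassmannian $Gr(n,E_{x_1}\oplus E_{x_2})$ of $n$-dimensional quotients $Q$, equivalently of the $n$-dimensional subspaces $F_1(E)=\ker(E_{x_1}\oplus E_{x_2}\to Q)$. The open condition cutting out $\widetilde{\mathcal{H}}'$, namely that both natural maps $E_{x_i}\to Q$ be isomorphisms, is equivalent to requiring that $F_1(E)$ project isomorphically onto each factor $E_{x_i}$, that is, that $F_1(E)$ be the graph of an isomorphism $\sigma\colon E_{x_1}\to E_{x_2}$. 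Hence the fibre of $\rho'$ over $[E_*]$ is identified with $GL(n,\mathbb{C})$, which is smooth, irreducible, of constant dimension $n^2$, and sits as a dense open subset of $Gr(n,2n)$. Since $E_{x_1}$ and $E_{x_2}$ are both $n$-dimensional, such graphs always exist, so $\rho'$ is a smooth surjective morphism onto $\widetilde{\mathcal{R}}$ whose fibres are irreducible and of constant dimension.

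Finally I would transfer the codimension. As $\rho'$ is flat (being smooth) with irreducible equidimensional fibres, for every closed subset $Z\subseteq\widetilde{\mathcal{R}}$ one has ${\rm codim}\big((\rho')^{-1}(Z),\widetilde{\mathcal{H}}'\big)={\rm codim}(Z,\widetilde{\mathcal{R}})$. Taking $Z=\widetilde{\mathcal{R}}-\widetilde{\mathcal{R}}^s$ and combining with the first step and Proposition \ref{codimparabolic} yields
$${\rm codim}\big(\widetilde{\mathcal{H}}'-\overline{\mathcal{H}}'^s,\widetilde{\mathcal{H}}'\big)={\rm codim}\big(\widetilde{\mathcal{R}}-\widetilde{\mathcal{R}}^s,\widetilde{\mathcal{R}}\big)\geq (n-1)(g(X)-1)+1.$$
I expect the one step requiring genuine care to be the fibrewise description: verifying that the locus carved out inside each Grassmannian fibre by the isomorphism conditions is exactly the $GL(n,\mathbb{C})$-cell, and that $\rho'$ is therefore smooth, surjective, and equidimensional over all of $\widetilde{\mathcal{R}}$. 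Once that is in hand, the codimension-preservation property of flat equidimensional morphisms reduces everything to the already-established estimate on $\widetilde{\mathcal{R}}$, and no new estimate on the Grassmannian bundle itself is required.
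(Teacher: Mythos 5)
Your proposal is correct and takes essentially the same route as the paper: the paper's proof likewise identifies $\widetilde{\mathcal{H}}'-\overline{\mathcal{H}}'^{s}$ with $\rho^{-1}(\widetilde{\mathcal{R}}-\widetilde{\mathcal{R}}^{s})$, asserts that $\rho\colon\widetilde{\mathcal{H}}'\to\widetilde{\mathcal{R}}$ is a fibre bundle with fibres isomorphic to $GL(n)$, and then transfers the bound of Proposition \ref{codimparabolic}, so your fibrewise graph description simply spells out what the paper states in one line. The only caveat (shared by the paper) is that over points of $\widetilde{\mathcal{R}}$ whose underlying sheaf fails to be locally free at $x_1,x_2$ the fibre of $\rho|_{\widetilde{\mathcal{H}}'}$ is empty, so the map is really a $GL(n)$-bundle onto the open locus of bundles rather than surjective onto all of $\widetilde{\mathcal{R}}$; since passing to that open subset can only increase the codimension of $\widetilde{\mathcal{R}}-\widetilde{\mathcal{R}}^{s}$, the desired inequality is unaffected.
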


\begin{proof}
	 Since $\rho:\widetilde{\mathcal{H}}'\to \widetilde{\mathcal{R}}$ is a fibre bundle with fibres isomorphic 
	 to $GL(n)$, we get
	 $${\rm codim}(\rho^{-1}(Z),\widetilde{\mathcal{H}}')={\rm codim}(Z,\widetilde{\mathcal{R}})$$ for 
	any closed subset $Z\subset \widetilde{\mathcal{R}}$.
	Now, from Proposition \ref{codimparabolic} and Proposition \ref{stabilitycorresp} it follows that
	${\rm codim}(\widetilde{\mathcal{H}}'-\overline{\mathcal{H}}'^{s},\widetilde{\mathcal{H}}')=
	{\rm codim}(\rho^{-1}(\widetilde{\mathcal{R}}-\widetilde{\mathcal{R}}^{s}),\widetilde{\mathcal{H}}')=
	{\rm codim}(\widetilde{\mathcal{R}}-\widetilde{\mathcal{R}}^{s},\widetilde{\mathcal{R}})\geq(n-1)(g(X)-1)+1.$
	
\end{proof}

Taking GIT quotients, we get the following corollary. 

\begin{corollary}\label{GG}
	With notations as above, we have
	$${\rm codim}(\mathcal{P}'-\overline{\mathcal{P}}^{s},\mathcal{P}')\geq(n-1)(g(X)-1)+1.$$
\end{corollary}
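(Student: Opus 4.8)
The plan is to transfer the codimension estimate of Proposition \ref{G} across the good quotient map, while correcting for the passage from $\widetilde{\mathcal{H}}'$ to its semistable locus. Write $G=PGL(N)$ and let $\pi\colon\widetilde{\mathcal{H}}'^{ss}\to\mathcal{P}'$ be the restriction to $\widetilde{\mathcal{H}}'^{ss}$ of the good quotient by $SL(N)$ acting through $G$; by construction $\mathcal{P}'$ is the quotient of $\widetilde{\mathcal{H}}'^{ss}$ and $\overline{\mathcal{P}}^s$ is the quotient of $\overline{\mathcal{H}}'^s$. First I would record that $\overline{\mathcal{H}}'^s$ is $G$-invariant and saturated: by Proposition \ref{stabilitycorresp}(1), a point of $\overline{\mathcal{H}}'^s$ (where the underlying parabolic bundle $E_*$ is stable) is a stable GPB, so its orbit is closed of dimension $\dim G$ with finite stabiliser. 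Hence $\pi^{-1}(\overline{\mathcal{P}}^s)=\overline{\mathcal{H}}'^s$, and writing $Z:=\widetilde{\mathcal{H}}'^{ss}-\overline{\mathcal{H}}'^s$ we have $Z=\pi^{-1}(\mathcal{P}'-\overline{\mathcal{P}}^s)$, with $\mathcal{P}'-\overline{\mathcal{P}}^s$ closed in $\mathcal{P}'$.

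Next I would compare the codimension of $Z$ in $\widetilde{\mathcal{H}}'^{ss}$ with the bound of Proposition \ref{G}. Since stability of $E_*$ is an open condition, $\overline{\mathcal{H}}'^s$ is open in $\widetilde{\mathcal{H}}'$, so its complement $\widetilde{\mathcal{H}}'-\overline{\mathcal{H}}'^s$ is closed and $Z=(\widetilde{\mathcal{H}}'-\overline{\mathcal{H}}'^s)\cap\widetilde{\mathcal{H}}'^{ss}$ is the intersection of this closed set with the dense open set $\widetilde{\mathcal{H}}'^{ss}$. Intersecting a closed subset with a dense open subset cannot decrease codimension (it only discards or shrinks components), so
$${\rm codim}(Z,\widetilde{\mathcal{H}}'^{ss})\ \geq\ {\rm codim}(\widetilde{\mathcal{H}}'-\overline{\mathcal{H}}'^s,\widetilde{\mathcal{H}}')\ \geq\ (n-1)(g(X)-1)+1.$$

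Finally I would push this estimate down through $\pi$. Because $\pi$ has a nonempty stable locus on which it is a geometric quotient with finite stabilisers, the generic fibre has dimension $\dim G$, whence $\dim\mathcal{P}'=\dim\widetilde{\mathcal{H}}'^{ss}-\dim G$; and by the theorem on the dimension of fibres every fibre $\pi^{-1}(y)$ has dimension at least $\dim\widetilde{\mathcal{H}}'^{ss}-\dim\mathcal{P}'=\dim G$. Since $Z$ is saturated, the fibres of $\pi|_Z$ are full fibres of $\pi$, so the generic fibre dimension of $\pi|_Z\colon Z\to\mathcal{P}'-\overline{\mathcal{P}}^s$ is at least $\dim G$, giving $\dim(\mathcal{P}'-\overline{\mathcal{P}}^s)\leq\dim Z-\dim G$. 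Combining the dimension relations,
$${\rm codim}(\mathcal{P}'-\overline{\mathcal{P}}^s,\mathcal{P}')=\dim\mathcal{P}'-\dim(\mathcal{P}'-\overline{\mathcal{P}}^s)\geq(\dim\widetilde{\mathcal{H}}'^{ss}-\dim G)-(\dim Z-\dim G)={\rm codim}(Z,\widetilde{\mathcal{H}}'^{ss}),$$
which together with the previous step yields the asserted bound.

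The step I expect to be the main obstacle is the last one: arguing that passing to the good quotient does not lower codimension. The points that must be checked carefully are that $Z$ is genuinely saturated (which is exactly why the identification $\pi^{-1}(\overline{\mathcal{P}}^s)=\overline{\mathcal{H}}'^s$ from the first step is needed) and that the generic fibre dimension of $\pi$ equals $\dim G$. Over strictly semistable points $\pi$ is only a good, not a geometric, quotient and fibres may jump upward, but this works in our favour, since a larger fibre dimension forces $\dim(\mathcal{P}'-\overline{\mathcal{P}}^s)$ to be smaller. One should also confirm irreducibility of $\widetilde{\mathcal{H}}'$ (or else argue component by component) so that the fibre-dimension theorem and the density of $\widetilde{\mathcal{H}}'^{ss}$ apply as stated.
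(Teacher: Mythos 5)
Your proof is correct and takes essentially the same route as the paper: the bound of Proposition \ref{G}, restricted to the semistable locus, then descended through the GIT quotient. The paper compresses your second and third steps into the single identity ${\rm codim}(\widetilde{\mathcal{H}}'-\overline{\mathcal{H}}'^{s},\widetilde{\mathcal{H}}')={\rm codim}(\widetilde{\mathcal{H}}'^{ss}-\overline{\mathcal{H}}'^s,\widetilde{\mathcal{H}}')$ followed by ``taking GIT quotients''; your saturation argument (via stability of the GPBs in $\overline{\mathcal{H}}'^s$, from Proposition \ref{stabilitycorresp}) and the fibre-dimension comparison are precisely the details that assertion elides.
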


\begin{proof}
  A proof follows from the fact 
  $${\rm codim}(\widetilde{\mathcal{H}}'-\overline{\mathcal{H}}'^{s},\widetilde{\mathcal{H}}')=
{\rm codim}(\widetilde{\mathcal{H}}'^{ss}-\overline{\mathcal{H}}'^s,\widetilde{\mathcal{H}}'),$$
                           and from Proposition \ref{G}.         
 \end{proof}

Let  $\mathcal{U}'_Y$  be the open dense subvariety of $\mathcal{U}_Y$ corresponding to parabolic vector bundles and let  $\overline{\mathcal{U}}_Y'^s$ be the  subset of $\mathcal{U}'_Y$ corresponding to parabolic bundles $F_*$ such that $p^*(F_*)$ is parabolic stable.

\begin{theorem}\label{codimpullback}
	With notations as above, we have 
	$${\rm codim}(\mathcal{U}'_Y-\overline{\mathcal{U}}_Y^{s},\mathcal{U}'_Y)\geq(n-1)(g(X)-1)+1.$$
\end{theorem}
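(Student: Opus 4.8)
The plan is to transfer the codimension estimate of Corollary \ref{GG}, which lives on the moduli space $\mathcal{P}$ of generalised parabolic sheaves on $X$, to the moduli space $\mathcal{U}_Y$ of parabolic sheaves on $Y$, using the finite surjective normalisation morphism $\phi\colon \mathcal{P}\to\mathcal{U}_Y$. Since finite surjective morphisms preserve dimensions of closed subsets, they preserve codimension; so the whole argument reduces to identifying, on both sides, how $\phi$ matches the four distinguished loci $\mathcal{P}'$, $\overline{\mathcal{P}}^s$, $\mathcal{U}'_Y$, $\overline{\mathcal{U}}_Y'^s$.

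First I would pin down the image of $\mathcal{P}'$. Recall that $\mathcal{P}'$ is the quotient of $\widetilde{\mathcal{H}}'^{ss}$, i.e.\ it parametrises (semistable) GPBs $(E_*,Q)$ for which the maps $q_i\colon E_{x_i}\to Q$ are isomorphisms, while $\mathcal{U}'_Y$ parametrises parabolic vector bundles on $Y$. By Lemma \ref{GPBcorresp} the associated sheaf $F$ on $Y$ from \eqref{ExactQPB} is locally free exactly when the $q_i$ are isomorphisms, and by Proposition \ref{bijection} the assignment $(E_*,Q)\mapsto F_*$ with $E=p^*F$ is a bijection $f\colon T\to R$ whose inverse sends $F_*$ to the unique GPB $(p^*F,Q)$. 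Consequently $\phi$ carries $\mathcal{P}'$ onto $\mathcal{U}'_Y$, the fibre of $\phi$ over each $F_*\in\mathcal{U}'_Y$ is the single point $(p^*F,Q)$, and $\phi^{-1}(\mathcal{U}'_Y)=\mathcal{P}'$; in particular $\phi|_{\mathcal{P}'}\colon\mathcal{P}'\to\mathcal{U}'_Y$ is a finite bijective morphism. The same bijection identifies the subloci: $\overline{\mathcal{P}}^s$ consists of those GPBs whose underlying parabolic bundle $E_*=p^*F$ is parabolic stable, and $\overline{\mathcal{U}}_Y'^s$ consists of those $F_*$ with $p^*(F_*)$ parabolic stable, so $\phi(\overline{\mathcal{P}}^s)=\overline{\mathcal{U}}_Y'^s$ and therefore
$$\mathcal{P}'-\overline{\mathcal{P}}^s=\bigl(\phi|_{\mathcal{P}'}\bigr)^{-1}\!\left(\mathcal{U}'_Y-\overline{\mathcal{U}}_Y'^s\right).$$

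With this matching in place, finiteness finishes the argument: since $\phi|_{\mathcal{P}'}$ is finite surjective we have $\dim\mathcal{P}'=\dim\mathcal{U}'_Y$ and $\dim\phi^{-1}(W)=\dim W$ for every closed $W\subseteq\mathcal{U}'_Y$, whence
$${\rm codim}\!\left(\mathcal{U}'_Y-\overline{\mathcal{U}}_Y'^s,\ \mathcal{U}'_Y\right)={\rm codim}\!\left(\mathcal{P}'-\overline{\mathcal{P}}^s,\ \mathcal{P}'\right)\geq(n-1)(g(X)-1)+1$$
by Corollary \ref{GG}. The main obstacle is the bookkeeping of the second paragraph, and in particular the point that $S$-equivalence does not enlarge the fibres of $\phi$ over the vector-bundle locus: one must check that every semistable GPS mapping into $\mathcal{U}'_Y$ is $S$-equivalent to a \emph{unique} GPB with isomorphism maps, so that $\phi^{-1}(\mathcal{U}'_Y)$ is exactly $\mathcal{P}'$ and $\phi|_{\mathcal{P}'}$ is genuinely finite onto $\mathcal{U}'_Y$. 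Once this is verified the codimension transfer is a formal consequence of finiteness.
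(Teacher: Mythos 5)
Your proposal is correct and takes essentially the same route as the paper: the paper's one-line proof simply asserts that $\mathcal{P}'$ and $\mathcal{U}'_Y$ are isomorphic (via the restriction of the normalisation $\phi$, exactly the identification you spell out through Lemma \ref{GPBcorresp} and Proposition \ref{bijection}) and then invokes Corollary \ref{GG}. Your more cautious formulation — a finite bijective morphism rather than an isomorphism, which still preserves codimension — and your explicit matching of $\overline{\mathcal{P}}^s$ with $\overline{\mathcal{U}}_Y'^s$ merely make explicit what the paper leaves implicit.
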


\begin{proof}
	Since $\mathcal{P}'$ and $\mathcal{U}_Y'$ are isomorphic, the theorem follows from Corollary \ref{GG}.
\end{proof}

We denote by  $U_Y$  the moduli space of $S$-equivalence 
classes of semistable parabolic sheaves of parabolic degree zero. Similarly, denote by  $U'_Y$, the open dense subvariety of $U_Y$ corresponding to parabolic vector bundles and by  $\overline{U}_Y'^s$, the  subset of $U'_Y$ 
corresponding to parabolic bundles $F_*$ such that $p^*(F_*)$ is parabolic stable. 

 \begin{theorem}\label{Repbijection}
	Define the set ${\rm Rep}$ by 
	\begin{equation*}
		{\rm Rep}:= 	\left\{
		\begin{aligned}
			& {\rm Equivalence \ classes \ of \ representations} \ (\rho,A)\colon \pi_1(Y-P)\to {\rm GL(n,\C)} \\
			& {\rm such \ that} \ \rho_X=\rho|_{\pi_1(X-P)} \ {\rm is \ irreducible \ unitary}
		\end{aligned}
		\right\}\,.	
	\end{equation*}
	Then there is a bijective correspondence between ${\rm Rep}$ and $\overline{U}_Y'^s$.	
\end{theorem}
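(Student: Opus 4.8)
The plan is to realize the asserted bijection as a composite of correspondences already available in the paper --- the Mehta--Seshadri theorem (Theorem~\ref{MS}), the GPB/parabolic-bundle dictionary $f$ of Proposition~\ref{bijection}, and the representation criterion for GPBs (Theorem~\ref{GPBrep}) --- and then to reconcile the two notions of equivalence. The group isomorphism $\pi_1(Y-P)\cong\pi_1(X-P)*\Z$ of Corollary~\ref{fungrpcor} is what lets a representation of $\pi_1(Y-P)$ split into a pair consisting of a representation $\rho_X$ of $\pi_1(X-P)$ together with a single matrix $g=\rho(1)$, matching exactly the two pieces of data in a GPB, namely a parabolic bundle $E_*$ and a gluing quotient $Q$.

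First I would pass from $\overline{U}_Y'^s$ to GPBs on $X$. If $F_*\in\overline{U}_Y'^s$ then $p^*F_*$ is parabolic stable, so by Proposition~\ref{stabilitycorresp}(2) the bundle $F_*$ is itself parabolic stable; hence $S$-equivalence collapses to isomorphism on $\overline{U}_Y'^s$, and its points are isomorphism classes of parabolic vector bundles $F_*$ of rank $n$ and parabolic degree zero on $Y$ with $p^*F_*$ parabolic stable. By Proposition~\ref{bijection} (via Lemma~\ref{GPBcorresp}), the inverse dictionary $f^{-1}$ then identifies $\overline{U}_Y'^s$ with the set of isomorphism classes of GPBs $(E_*,Q)\in T$ whose underlying bundle $E_*=p^*F_*$ is parabolic stable.

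Next I would define the two maps and check they land correctly. Given $(\rho,A)\in{\rm Rep}$, the irreducible unitary $\rho_X$ produces a parabolic stable bundle $(E_{\rho_X})_*$ of parabolic degree zero by Theorem~\ref{MS}; the element $g=\rho(1)$ supplies the gluing isomorphism $(E_{\rho_X})_{x_1}\to(E_{\rho_X})_{x_2}$ and hence the quotient $Q_\rho$, exactly as in the construction preceding Theorem~\ref{GPBrep}. This yields a GPB in $T$ with parabolic stable underlying bundle, and applying $f$ gives $F_*$ with $p^*F_*=(E_{\rho_X})_*$ parabolic stable, so $F_*\in\overline{U}_Y'^s$; call this assignment $\Phi$. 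Conversely, for $F_*\in\overline{U}_Y'^s$ the unique GPB $(E_*,Q)$ with $E_*=p^*F_*$ has parabolic stable, hence indecomposable, underlying bundle of parabolic degree zero, so by Theorem~\ref{GPBrep} it arises from a representation $\zeta$ of $\pi_1(X-P)*\Z\cong\pi_1(Y-P)$; since $\zeta_X$ corresponds under Theorem~\ref{MS} to the parabolic stable bundle $E_*$ it is irreducible unitary, placing $(\zeta,A)$ in ${\rm Rep}$. This gives $\Psi$.

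Finally I would verify that $\Phi$ and $\Psi$ are mutually inverse after passing to equivalence classes, and this is where I expect the real work to lie. That the underlying constructions invert one another is built into Theorem~\ref{GPBrep} and Proposition~\ref{bijection}; the genuine content is to show that isomorphism of the GPBs $((E_\zeta)_*,Q_\zeta)$ matches equivalence of the representations $\zeta$ in ${\rm GL}(n,\C)$. Unwinding Mehta--Seshadri, an isomorphism of such GPBs restricts on $X-P$ to an equivariant isomorphism of associated bundles, i.e.\ an intertwiner $h$ with $\zeta_X'=h\zeta_X h^{-1}$, which under the fixed universal-cover trivializations of the fibers over $x_1$ and $x_2$ acts as the one matrix $h$ on each fiber; compatibility with the quotient maps $q_i$ then forces $hg=g'h$, so $\zeta'=h\zeta h^{-1}$, and the converse is obtained by reversing these steps. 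The main obstacle is precisely this fiberwise bookkeeping: I must track how the gluing parameter $g=\zeta(1)$ transforms under a bundle isomorphism and confirm that the freedom in choosing the lifts $z_1,z_2$, hence the trivializations over $x_1,x_2$, corresponds exactly to conjugation, so that no spurious identifications are introduced on either side.
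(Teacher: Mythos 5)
Your proposal is correct and follows essentially the same route as the paper's own proof: both directions are built from the Mehta--Seshadri correspondence (Theorem~\ref{MS}), the GPB dictionary of Proposition~\ref{bijection}, the gluing datum $g=\rho(1)$ coming from the splitting $\pi_1(Y-P)\cong\pi_1(X-P)*\Z$, and Proposition~\ref{stabilitycorresp} to see that the constructions land in the stated sets. The only difference is that you flag and sketch the verification that conjugation of representations matches isomorphism of GPBs (so the maps are well defined and mutually inverse on equivalence classes), a point the paper's proof passes over in silence --- your extra care here is a refinement, not a divergence.
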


\begin{proof}
	If $\rho_X$ is irreducible unitary, then $(E_{\rho})_*$ on $X$ and hence $(F_{\rho})_*$ on $Y$ are parabolic 
	stable (Proposition \ref{stabilitycorresp}). Also, $(E_{\rho})_*$ is the pullback of $(F_{\rho})_*$.
	
	If $F_*$ is a parabolic vector bundle such that $p^*(F_*)$ is parabolic stable, by \cite{VMCS}, there 
	is an irreducible unitary representation $\rho_X$ of $\pi_1(X-P)$ such that $p^*(F_*)=(E_{\rho_X})_*$. 
	Now $F_*$ can be obtained from $(E_{\rho_X})_*$ by identifying the fibres of $E_{\rho_X}$ at $x_1,x_2$ by 
	an isomorphism $\sigma$. Choosing $\sigma$ is equivalent to choosing an element $g\in {\rm GL}(n,\C)$. 
	Now one can define a representation $\rho:\pi_1(X-P)*X\to {\rm GL}(n,\C)$ by $\rho|_{\pi_1(X-P)}=\rho_X$ 
	and $\rho(1)=g$ as described in Theorem \ref{GPBrep}. Then $F_*=(F_{\rho})_*$
\end{proof}

\begin{theorem} 
	Let $Y$ be a complex nodal curve with $g(X) \ge 2$ and let $P\in Y$ be a smooth point. 
	The subset of $U'_Y$ consisting of vector bundles which arise from irreducible unitary 
	representations of the  fundamental group of $\pi_1(Y-P)$ has complement of codimension at least $2$.
\end{theorem}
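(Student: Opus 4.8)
The plan is to reduce the statement to the codimension estimate already assembled in Section 4 by first identifying the ``good'' locus with the subset $\overline{U}_Y'^s$. By Theorem \ref{Repbijection} there is a bijection between ${\rm Rep}$ and $\overline{U}_Y'^s$ under which an equivalence class $(\rho,A)$, with $\rho_X=\rho|_{\pi_1(X-P)}$ irreducible unitary, corresponds to the parabolic bundle $(F_\rho)_*$ it determines; conversely, by Theorem \ref{Result1} a bundle $F_*\in U'_Y$ arises from such a representation of $\pi_1(Y-P)$ precisely when every direct summand of $p^*(F_*)$ has parabolic degree zero and $\rho_X$ is irreducible unitary, i.e.\ exactly when $p^*(F_*)$ is parabolic stable. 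Thus the subset of $U'_Y$ consisting of bundles arising from irreducible unitary representations is nothing but $\overline{U}_Y'^s$, and it suffices to bound the codimension of $U'_Y-\overline{U}_Y'^s$ in $U'_Y$.

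Next I would invoke the codimension chain of Section 4. The moduli space $U'_Y$ of parabolic degree zero is the parabolic-degree-zero specialization of $\mathcal{U}'_Y$, and the whole argument Proposition \ref{codimparabolic} $\to$ Proposition \ref{G} $\to$ Corollary \ref{GG} $\to$ Theorem \ref{codimpullback} applies unchanged, since the dimension counts of \cite[Proposition 5.1]{XS} and the $GL(n)$-fibre-bundle comparison of Proposition \ref{G} depend only on $n$ and $g(X)$, not on the value of the degree. This gives
\[
{\rm codim}\big(U'_Y-\overline{U}_Y'^s,\, U'_Y\big)\;\geq\;(n-1)(g(X)-1)+1.
\]
Reading off the bound, for $n\geq 2$ and $g(X)\geq 2$ one has $(n-1)(g(X)-1)+1\geq 1\cdot 1+1=2$, which is the desired inequality; the rank-one case is immediate, since the pullback of a parabolic line bundle has no proper nonzero parabolic subbundle and is therefore automatically parabolic stable, so that $\overline{U}_Y'^s=U'_Y$ and the complement is empty.

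The only point requiring genuine care is the identification in the first step: one must make sure that ``arising from an irreducible unitary representation of $\pi_1(Y-P)$'' is read in the sense fixed by the convention preceding Theorem \ref{Repbijection}, namely that the restriction $\rho_X$ (rather than the full representation, whose value $\rho(1)=g$ is an arbitrary element of ${\rm GL}(n,\C)$) is irreducible unitary, so that Theorem \ref{Repbijection} applies verbatim and the good locus is exactly $\overline{U}_Y'^s$. Once this identification is secured, the remaining steps are formal, and the main (mild) obstacle is simply checking that the codimension estimate of Theorem \ref{codimpullback}, stated there for a fixed degree, does transfer to the parabolic-degree-zero moduli $U'_Y$ used in the statement.
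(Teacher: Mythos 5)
Your proposal is correct and follows essentially the same route as the paper: identify the locus of bundles arising from irreducible unitary representations with $\overline{U}_Y'^s$ via Theorem \ref{Repbijection}, then apply the codimension bound of Theorem \ref{codimpullback} and note that $(n-1)(g(X)-1)+1\geq 2$ when $g(X)\geq 2$. Your additional care about the transfer to parabolic degree zero and the trivial rank-one case only makes explicit what the paper leaves implicit (the paper instead remarks that the bijection is a homeomorphism, which is not strictly needed for the set-theoretic codimension statement).
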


\begin{proof}
	The bijection in Theorem \ref{Repbijection} is, in fact, a homeomorphism and can be seen using the fact that 
	Mehta-Seshadri correspondence is a homeomorphism. Hence the theorem follows from Theorem \ref{codimpullback}(2).
\end{proof}

\section*{Acknowledgments}
The authors are extremely grateful to Prof. Usha N. Bhosle for a careful reading of the 
paper and for providing insightful comments and detailed suggestions  which helped in improving the manuscript considerably. The second named author is supported by the SERB Early Career Research Award (ECR/2016/000649) by the Department of Science \& Technology (DST), Government of India. 

\bibliographystyle{plain}

\end{document}